\documentclass[12pt,twoside,english]{article}
\usepackage{amsfonts,babel,amssymb,amsmath,amsthm}
\usepackage{graphicx}

\usepackage{tikz}
\newcommand*\circled[1]{\tikz[baseline=(char.base)]{
            \node[shape=circle,draw,inner sep=0.5pt] (char) {\tiny\rm#1};}}

\evensidemargin 0cm \oddsidemargin 0cm \setlength{\topmargin}{-1cm}
\setlength{\textheight}{23truecm} \textwidth 16truecm

\newtheorem{proposition}{Proposition}[section]
\newtheorem{corollary}[proposition]{Corollary}
\newtheorem{lemma}[proposition]{Lemma}
\newtheorem{theorem}[proposition]{Theorem}

\numberwithin{equation}{section}

\newcommand{\punto}{\,\,\cdot\,\,}
\newcommand{\ds}{\displaystyle}
\newcommand{\smallfrac}[2]{{\textstyle\frac{#1}{#2}}}
\newcommand{\jump}[1]{[\![#1]\!]}
\newcommand{\ave}[1]{\{\!\!\{#1\}\!\!\}}
\newcommand{\triple}[1]{|\!|\!|#1|\!|\!|}

\title{Analysis of a non-symmetric coupling of Interior Penalty DG and BEM}

\author{Norbert Heuer\footnote{Facultad de Matem\'aticas, Pontiﬁcia Universidad Cat\'olica de Chile, Avenida Vicu\~na Mackenna 4860, Santiago,
Chile. -- {\tt nheuer@mat.puc.cl}. Partially supported by FONDECYT project 1110324.} \& Francisco-Javier Sayas\footnote{Department of Mathematical Sciences, University of Delaware, Newark, DE 19716, USA --
{\tt fjsayas@math.udel.edu}}}

\date{\today}

\begin{document}

\maketitle

\begin{abstract}
We analyze a non-symmetric coupling of interior penalty discontinuous Galerkin and boundary element methods in two and three dimensions. Main results are discrete coercivity of the method, and thus unique solvability, and quasi-optimal convergence. The proof of coercivity is based on a localized variant of the variational technique from [F.-J. Sayas, The validity of Johnson-N\'edel\'ec's BEM-FEM coupling on polygonal interfaces, {\em SIAM J. Numer. Anal.}, 47(5):3451--3463, 2009]. This localization gives rise to terms which are carefully analyzed in fractional order Sobolev spaces, and by using scaling arguments for rigid transformations.
Numerical evidence of the proven convergence properties has been published previously.\\
\
{\bf AMS Subject classification.} 65N30, 65N38, 65N12, 65N15
\end{abstract}

\section{Introduction}

In a recent article, Of, Rodin, Steinbach and Taus \cite{OfRodSteTauSB} propose three discretization methods that combine Interior Penalty Discontinuous Galerkin methods (IPDG) with Boundary Element Methods (BEM). One of the methods falls into the category of non-symmetric coupling of Finite and Boundary Elements, while the other two belong to the general symmetric coupling philosophy. Only one of the symmetric methods is analyzed but numerical evidence of the good properties of the non-symmetric coupling is given. In this paper we prove that the non-symmetric method in \cite{OfRodSteTauSB} converges, this being, to the best of our knowledge, {\em the first successful analysis of a non-symmetric coupling of DG and BEM.}

Let us first briefly revise the milestones of the literature of BEM-FEM coupled schemes. The mathematical literature on BEM-FEM coupling can be traced back to the seminal work of Brezzi, Johnson and N\'ed\'elec \cite{BreJoh79, JohNed80}, the article \cite{JohNed80} being an early and very relevant contribution to the subject. (It has to be noted, though, that the engineering literature had previously visited these ideas and produced interesting results.) The method of \cite{JohNed80} uses one integral equation --derived from Green's Third Identity-- to construct a non-local boundary condition in order to cut off the computational domain for an exterior diffusion problem. Because of the way the analysis was approached, using compactness arguments, the cut-off boundary where the non-local integral condition is imposed had to be taken to be smooth in order for the analytical arguments to be meaningful. While practitioners never found clear reasons to dismiss the use of simpler polygonal interfaces, theory did not move very much from this initial stagnation for more than two decades. Instead, symmetric coupling methods overcame the theoretical difficulty by using a second integral equation in the set, either mixing the integral operators with the interior formulation (two-field method) or by using an additional boundary unknown (three-field method).This restored symmetry to the coupled formulation and, with it, coercivity. The original work of Costabel and Stephan \cite{Cos87, CosSte87}, and Han \cite{Han90} started this fruitful trend that for many years enjoyed the prestige of full theoretical justification. In particular, most coupling schemes for the Boundary Element method with other domain methods (Mixed Finite Elements, Non-conforming FEM or Discontinuous Galerkin approximations) were directly based on the ideas of symmetric coupling. The monograph \cite{GatHsi95} collects much of what was known in the mid-nineties on the mathematics of BEM-FEM coupling.

Non-symmetric coupling was revisited recently with the very simple result that variational techniques were enough to prove stability of Galerkin methods for non-symmetric BEM-FEM coupling. The first result in this direction \cite{Say09} was further simplified in \cite{Ste11} and
\cite{GatHsiSayTA}, and right now, it is clear that the proof itself does not contain any theoretical ingredient that does not belong to the analytical toolbox that is employed for analysis of elliptic boundary integral equations \cite{McL00}. Similar ideas are treated in \cite{MedSaySel11} for non-symmetric coupling of Mixed-FEM with BEM. As already mentioned, this work is the first contribution to non-symmetric coupling of DG and BEM.

Mathematical theory for the coupling of Discontinuous Galerkin methods (DG) and BEM is less than one decade old. The coupling of Locally DG (LDG) methods with BEM was proposed in \cite{GatSay06},
and extended in \cite{BusGatSay09}, based on a symmetric three-field formulation with an additional mortar variable. Although the LDG method has the aspect of a mixed method (it approximates both the potential and its gradient), it can be described using a so-called primal form (that uses only the potential). Barring technical difficulties, this meant that the symmetric coupling led to an analysis based on an energy (coercivity) estimate in the proper discrete norm. The primal formulation in \cite{GatSay06, BusGatSay09} is non-consistent and a Strang-type analysis is needed. A later paper
\cite{GatHeuSay10} eliminated the need of the mortar variable and many of the theoretical difficulties by demanding that the discontinuous piecewise polynomial functions that approximate the potential in the LDG method become continuous at the coupling interface. In practice this can be enforced using Lagrange multipliers (see some further explanations in  \cite{OfRodSteTauSB}). The paper \cite{GatHeuSay10} also showed how to generalize to some methods of the IPDG class.

Insisting in the symmetric approach, the paper \cite{CocSayTA} proposes a systematic approach to couple BEM with DG-FEM  and shows that most known methods fit into a double general framework. The symmetric coupling of Hybridizable DG (HDG) with BEM is proposed in \cite{CocSayTA}, although analysis was postponed to the more recent \cite{CocGuzSaySB}. Note that, in the very different context of the transient wave equation, and including only a stability analysis based on energy arguments, Abboud, Joly, Rodr\'iguez and Terrase \cite{AbbJolRodTer11} present the first time-domain coupling of DG methods with BEM. 

Coming back to the idea of non-symmetric methods, in this paper we analyze a non-symmetric coupling of IPDG with BEM proposed in \cite{OfRodSteTauSB}. The model problem will be a transmission problem in free space (in two and three dimensions). Without giving full details at this moment, let us explain what the difficulties are. The discrete scheme is the search for $(u_h,\lambda_h)\in V_h\times \Lambda_h$ such that:
\begin{equation}\label{eq:1.1}
\left[ 
\begin{array}{lll}
\ds a_{\mathrm{DG}}(u_h,v_h) -\langle \lambda_h,v_h\rangle_\Gamma 
&\ds=(f,v_h)_{\Omega_-}+\langle\beta_1, v_h\rangle_\Gamma &  \forall v_h\in V_h,\\
\ds \langle \mu_h,\smallfrac12 u_h-\mathcal K u_h\rangle_\Gamma +\langle \mu_h,\mathcal V \lambda_h\rangle_\Gamma &=\langle\mu_h,\smallfrac12\beta_0-\mathcal K\beta_0\rangle_\Gamma & \forall \mu_h\in \Lambda_h.
\end{array}
\right.
\end{equation}
Here $V_h$ is a space of discontinuous piecewise polynomial functions on a triangulation of the domain $\Omega_-$ and $\Lambda_h$ is a space of piecewise polynomials of the same degree on the triangulation of the interface $\Gamma$ that is inherited from the one of $\Omega_-$. The bilinear form $a_{\mathrm{DG}}(u_h,v_h)$ approximates the Dirichlet form $(\nabla u,\nabla v)_{\Omega_-}$ and contains a first group of non-conforming terms, including jumps of the solution and the test function and a penalization term on the element faces included to stabilize the method. The class of methods that we treat in this general DG scheme include the original Interior Penalty method of Arnold \cite{Arn82} and some symmetric and non--symmetric variants, all of them fitting in the successful unified framework developed by Arnold, Brezzi, Cockburn and Marini in \cite{ArnBreCocMar01}. The symbols $\mathcal K$ and $\mathcal V$ correspond to two boundary integral operators and create the non-local integral boundary condition for the problem. The off-diagonal terms in \eqref{eq:1.1}, involving boundary and interior quantities, are actually non-conforming approximations of $H^{-1/2}(\Gamma)\times H^{1/2}(\Gamma)$ duality products in the variational formulation of the problem.

As already intuited in a remark in \cite{OfRodSteTauSB}, the variational technique of \cite{Say09} (essentially integration by parts after recognizing that the exterior potential has to be transmitted to the interior domain) plays a key role in this analysis. However, integration by parts has now to be applied element-by-element and a whole new array of terms have to be bounded below and hidden by carefully tuned weighted Young inequalities in the essential coercivity estimate (given in Theorem \ref{th:2.1}). The analysis requires the handling of fractional order Sobolev spaces on the interior faces (scalability of these norms --or lack thereof-- will be a fact to keep in mind) and some scaling arguments that have to be dealt with rigidly, due to the fact that harmonicity of the exterior potential that is transmitted to the interior domain is needed in key steps of the process. A new source of theoretical complications stems from the fact that the discrete norm for which the coercivity result holds does not contain any term involving the value of $u_h$ on the interface $\Gamma$. This fact will make the bilinear form not bounded in the coercivity norm and a stronger norm has to be produced in upper bounds for the global bilinear form. A final detail that is not entirely obvious arises from the fact that we are dealing with a transmission problem in free space and that constants in the interior domain play a certain separate role from the analytical point of view. While this difficulty could be easily removed by considering a simpler problem (an exterior Dirichlet problem), we find it worthwhile to work out all the details for this case. The entire analysis (especially discrete coercivity) requires the careful handling of inequalities related to fractional order Sobolev norms in some reference configurations. We have striven to make all details as transparent as possible to the reader, in the hope that they will help for possible future generalizations.

The paper is structured as follows. In Section \ref{sec:2} we present the model problem, its variational formulation as a boundary-field problem and its discretization with an IPDG-BEM scheme. We also state the two main results of this paper: discrete coercivity (Theorem \ref{th:2.1}) and optimal convergence (Theorem \ref{th:2.3}). Section \ref{sec:3} is devoted to proving Theorem \ref{th:2.1}, building up from technical estimates to a full detailed proof of the discrete coercivity of the method. Section \ref{sec:4} works in a very similar way to provide a proof of Theorem \ref{th:2.3}. We remark that numerical evidence of the performance of this method has already been presented in \cite{OfRodSteTauSB}.

\paragraph{Prerequisites.} Some basic knowledge of the basic Sobolev spaces $H^m(\mathcal O)$, their norms $\|\cdot\|_{m,\mathcal O}$ and seminorms $|\punto|_{m,\mathcal O}$ will be assumed throughout. A simple subscripted norm $\|\punto\|_B$ will always refer to the $L^2(B)$ norm. The fractional order Sobolev spaces $H^{\pm1/2}(\Gamma)$ will be used from the beginning of the paper in order to introduce the formulation and to present the results. However, in Section \ref{sec:3} we will be very precise in positive and negative order Sobolev spaces both on a domain, its boundary or part of it. A very detailed reference for these results is the monograph of McLean \cite{McL00} that also includes proofs of all the mapping properties of the potentials and integral operators that will be loosely used in this work.

\section{A non-symmetric coupling of IPDG and BEM}\label{sec:2}

\subsection{Model problem and two-field non-symmetric formulation}\label{sec:2.1}

Let $\Omega_-$ be a bounded polygonal domain in the plane or a polyhedral domain with Lipschitz boundary in the space. Let $\Gamma:=\partial\Omega_-$ and $\Omega_+:=\mathbb R^d\setminus\overline{\Omega_-}$. For the sake of simplicity, we will assume that $\Omega_+$ is connected. The symbol $\gamma$ will be used to denote the trace operator, while $\partial_\nu$ will be used for the normal derivative. The model problem is a transmission problem of the form
\begin{equation}\label{eq:2.1}
\left[
\begin{array}{ll}
-\Delta u = f & \mbox{in $\Omega_-$},\\
\gamma u = \gamma u_++\beta_0 & \mbox{on $\Gamma$},\\
\partial_\nu u = \partial_\nu u_++\beta_1 & \mbox{on $\Gamma$},\\
-\Delta u_+ = 0 & \mbox{in $\Omega_+$},\\
u=\mathcal O(1/r) & \mbox{as $r\to \infty$}.
\end{array}
\right.
\end{equation}
We assume that $\beta_0\in H^{1/2}(\Gamma)$, $\beta_1\in L^2(\Gamma)$ and $f\in L^2(\Omega_-)$. A necessary and sufficient condition for existence of solution in the two-dimensional case is
\begin{equation}\label{eq:2.2}
\int_{\Omega_-} f +\int_\Gamma \beta_1 =0.
\end{equation}
In the two-dimensional case, it follows from well-known results on potential theory that
\begin{equation}\label{eq:2.3}
\partial_\nu u_+\in H^{-1/2}_0(\Gamma) :=\{\lambda \in H^{-1/2}(\Gamma)\,:\, \langle \lambda,1\rangle_\Gamma =0\},
\end{equation}
where the angled brackets are used to denote the $H^{-1/2}(\Gamma)\times H^{1/2}(\Gamma)$ duality product.

For representation of the exterior solution we need to introduce the layer potentials on $\Gamma$ and some associated boundary integral operators. Let
\[
\Phi(\mathbf x,\mathbf y):=\left\{ \begin{array}{ll}
\ds-\smallfrac1{2\pi}\log|\mathbf x-\mathbf y|, & \mbox{if $d=2$},\\
\ds\frac1{4\pi|\mathbf x-\mathbf y|},& \mbox{if $d=3$},
\end{array}\right.
\]
be the fundamental solution of the Laplace operator. Let then
\begin{eqnarray*}
\mathcal S\lambda &:=& \int_\Gamma \Phi(\punto,\mathbf y)\lambda(\mathbf y)\mathrm d\Gamma(\mathbf y),\\
\mathcal D\varphi &:=& \int_\Gamma \partial_{\nu(\mathbf y)}\Phi(\punto,\mathbf y) \varphi(\mathbf y)\mathrm d\Gamma(\mathbf y),
\end{eqnarray*}
be the single and double layer potentials, that define solutions of the Laplace equation in $\mathbb R^d\setminus\Gamma$ for arbitrary $\lambda\in H^{-1/2}(\Gamma)$ and $\varphi\in H^{1/2}(\Gamma)$. Decay at infinity of $\mathcal S\lambda$ and $\mathcal D\varphi$ is $\mathcal O(1/r)$, with the additional assumption that $\lambda\in H^{-1/2}_0(\Gamma)$ in the two-dimensional case. The exterior solution can be represented using Green's Formula (Green's Third Identity):
\begin{equation}\label{eq:2.4}
u_+=\mathcal D\gamma u_+-\mathcal S \partial_\nu u_+=\mathcal D(\gamma u-\beta_0)-\mathcal S \lambda, \qquad \lambda:=\partial_\nu u_+.
\end{equation}
Considering the boundary integral operators
\begin{equation}\label{eq:2.5}
\mathcal V \lambda:=\gamma^\pm (\mathcal S \lambda) \qquad \mathcal K\varphi:=\smallfrac12 (\gamma^+\mathcal D \varphi+\gamma^-\mathcal D \varphi),
\end{equation}
the transmission problem can be equivalently written as the search for $(u,\lambda)\in H^1(\Omega_-)\times H^{-1/2}(\Gamma)$ that satisfy
\begin{equation}\label{eq:2.6}
\left[ 
\begin{array}{lll}
\ds (\nabla u,\nabla v)_{\Omega_-} -\langle \lambda,\gamma v\rangle_\Gamma 
&\ds=(f,v)_{\Omega_-}+\langle\beta_1,\gamma v\rangle_\Gamma &  \forall v \in H^1(\Omega_-),\\
\ds \langle \mu,\smallfrac12 \gamma u-\mathcal K\gamma u\rangle_\Gamma +\langle \mu,\mathcal V \lambda\rangle_\Gamma &=\langle\mu,\smallfrac12\beta_0-\mathcal K\beta_0\rangle_\Gamma & \forall \mu \in H^{-1/2}(\Gamma),
\end{array}
\right.
\end{equation}
followed by the integral representation \eqref{eq:2.4}. Note that in the two-dimensional case, condition \eqref{eq:2.2} guarantees that $\lambda\in H^{-1/2}_0(\Gamma)$, as can be seen by testing the first equation in \eqref{eq:2.6} with $v\equiv 1$.  An important aspect of the single layer operator $\mathcal V$ is the fact that it is coercive, namely, there exists $C_\Gamma>0$ such that
\begin{equation}\label{eq:3.14}
C_\Gamma^{-1}\|\lambda\|_{-1/2,\Gamma}^2\le \langle \lambda,\mathcal V\lambda\rangle_\Gamma = \|\nabla (\mathcal S\lambda)\|_{\mathbb R^d}^2 \qquad \left\{\begin{array}{ll}\forall \lambda \in H^{-1/2}_0(\Gamma) & \mbox{if $d=2$},\\
\forall \lambda \in H^{-1/2}(\Gamma) & \mbox{if $d=3$}.\end{array}\right.
\end{equation}
In some forthcoming arguments (but not in the numerical method itself) it will be useful to decompose the interior unknown as 
\begin{equation}\label{eq:2.7}
u=u_\star+c\qquad c\in \mathbb P_0(\Omega_-)\qquad u_\star\in L^2_\star(\Omega_-):=\{ v \in L^2(\Omega_-)\,:\,(v,1)_{\Omega_-}=0\}.
\end{equation}
Since $\mathcal K 1\equiv -\frac12$, using the decomposition \eqref{eq:2.7} in the second equation of \eqref{eq:2.6} tested with $\mu\equiv 1$ yields the formula
\begin{equation}\label{eq:2.7b}
c=-\frac1{|\Gamma|}  \int_\Gamma \Big(\mathcal V\lambda +\smallfrac12(\gamma u_\star-\beta_0)-\mathcal K (\gamma u_\star-\beta_0)\Big).
\end{equation}

\subsection{Discretization with IPDG and BEM}

Consider now a conforming shape regular family of triangulations $\{\mathcal T_h\}$ of $\Omega_-$, made up of triangles/tetrahedra. Let $\mathcal E_h$ be the set of all edges/faces (in the two/three-dimensional cases) of elements of $\mathcal T_h$ and let $\mathcal E_h^\circ$ be the set of interior edges/faces. On each internal edge/face we assume that a fixed orientation of the normal vector has been chosen. Using this orientation, to each $e\in \mathcal E_h^\circ$ we can associate two elements $K_\pm\in \mathcal T_h$ such that $e=\overline K_+\cap \overline K_-$ and that the normal vector on $e$ points from $K_-$ to $K_+$. This allows us to introduce the following notation for jumps of traces and averages of normal derivatives
\begin{equation}\label{eq:2.8}
\jump{u}:=u|_{K_+}-u|_{K_-} \qquad \ave{\partial_\nu u}=\smallfrac12 (\partial_\nu u|_{K_+}+\partial_\nu u|_{K_-}).
\end{equation}
This notation coincides with the one used in \cite{OfRodSteTauSB} and differs from the one in the unified presentation of DG methods in \cite{ArnBreCocMar01} where the jump of a scalar magnitude is a vector pointing in the normal direction and the average of vector valued quantities is a scalar magnitude. For the purpose of our analysis (and for ease of comparison with \cite{OfRodSteTauSB}) the choice \eqref{eq:2.8} seems to be the adequate one. Note, however, that the algebraic expressions for the IPDG method are the same with the two choices of notation. The triangulation $\mathcal T_h$ creates a partition/triangulation $\Gamma_h$ of the boundary $\Gamma$.

The discrete spaces that are needed for the method are simply discontinuous piecewise polynomial functions:
\begin{eqnarray*}
V_h &:=& \{ v_h:\Omega \to \mathbb R\,:\, v_h|_K \in \mathbb P_k(K)\quad \forall K\in \mathcal T_h\},\\
\Lambda_h &:=& \{ \lambda_h :\Gamma \to \mathbb R\,:\, \lambda_h|_e\in \mathbb P_k(e)\quad \forall e \in \Gamma_h\}.
\end{eqnarray*}
For functions that are sufficiently smooth on each element, we can define the IPDG bilinear form:
\begin{eqnarray*}
a_{\mathrm{DG}}(u_h,v_h) &:=& (\nabla_h u_h,\nabla_h v_h)_{\Omega_-}-\sum_{e\in\mathcal E_h^\circ}\langle \ave{\partial_\nu u_h},\jump{v_h}\rangle_e\\
& &-\xi \sum_{e\in\mathcal E_h^\circ}\langle \ave{\partial_\nu v_h},\jump{u_h}\rangle_e + \sum_{e\in \mathcal E_h^\circ} \frac{\sigma_e}{h_e}\langle \jump{u_h},\jump{v_h}\rangle_e.
\end{eqnarray*}
Here, $\nabla_h$ is the gradient operator applied elementwise, $h_e$ is the length/diameter of $e\in \mathcal E_h$ and $\sigma_e>0$ is a constant associated to each edge/face.
Careful explanation of how to arrive at this bilinear form (and why), as well as of the three choices for $\xi\in \{-1,0,1\}$ can be found in \cite{ArnBreCocMar01}. A quantity we will need to control is
\[
\sigma_{\min}:= \min_{e\in \mathcal E_h^\circ} \sigma_e.
\]
The coupled IPDG-BEM scheme consists of the non-conforming Galerkin discretization of \eqref{eq:2.6} based on the subspaces $V_h$ and $\Lambda_h$, and on the substitution of the Dirichlet form $(\nabla u,\nabla v)_{\Omega_-}$ by $a_{\mathrm{DG}}(u,v)$. The method looks for $(u_h,\lambda_h)\in V_h\times \Lambda_h$ such that 
\begin{equation}\label{eq:2.9}
\left[ 
\begin{array}{lll}
\ds a_{\mathrm{DG}}(u_h,v_h) -\langle \lambda_h,v_h\rangle_\Gamma 
&\ds=(f,v_h)_{\Omega_-}+\langle\beta_1, v_h\rangle_\Gamma &  \forall v_h\in V_h,\\
\ds \langle \mu_h,\smallfrac12 u_h-\mathcal K u_h\rangle_\Gamma +\langle \mu_h,\mathcal V \lambda_h\rangle_\Gamma &=\langle\mu_h,\smallfrac12\beta_0-\mathcal K\beta_0\rangle_\Gamma & \forall \mu_h\in \Lambda_h.
\end{array}
\right.
\end{equation}
All occurrences of angled brackets in \eqref{eq:2.9} correspond to $L^2(\Gamma)$ inner products (they are not duality products any longer). In comparison with \eqref{eq:2.6}, the trace operator has been eliminated, since now $u_h$ and $v_h$ are piecewise smooth functions that can be restricted to the boundary, although not with the global trace operator. The additional regularity assumed for $\beta_1$ (for the sake of existence of solution to the original problem, only $\beta_1\in H^{-1/2}(\Gamma)$ is needed) is justified by its occurrence in the right-hand side of \eqref{eq:2.9}. The operator $\mathcal K$ acts on the restriction of $u_h$ to the boundary. Since $\mathcal K:L^2(\Gamma)\to L^2(\Gamma)$ is bounded (see \cite{McL00}), this term poses no problem from the practical point of view.

Algorithmic aspects of this formulation are treated in \cite{OfRodSteTauSB}. Let us just point out a detail referred to the two-dimensional case that is not covered in \cite{OfRodSteTauSB}. If we test the first equation of \eqref{eq:2.9} with $v_h \equiv 1 \in V_h$, it follows that $\lambda_h \in H^{-1/2}_0(\Gamma)$. This allows for the discrete exterior solution
\begin{equation}\label{eq:2.10}
u_{+,h}:= \mathcal D (u_h-\beta_0)-\mathcal S\lambda_h
\end{equation}
to have the right $\mathcal O(1/r)$ decaying behavior at infinity. For some future theoretical considerations, it will be convenient to consider the space
\[
\Lambda_h^{(0)}= \left\{ \begin{array}{ll} \Lambda_h\cap H^{-1/2}_0(\Gamma), & \mbox{if $d=2$},\\
\Lambda_h, & \mbox{if $d=3$}.\end{array}\right.
\]

Also, parallel to \eqref{eq:2.7}, and only for analytical purposes, it will be convenient to consider the decomposition
\begin{equation}\label{eq:2.11}
u_h=u_{h,\star}+c_h \qquad c_h \in \mathbb P_0(\Omega_-)\qquad u_{h,\star}\in V_h^\star:=V_h\cap L^2_\star(\Omega_-).
\end{equation}
Note that
\begin{equation}\label{eq:2.12}
c_h=-\frac1{|\Gamma|}  \int_\Gamma \Big(\mathcal V\lambda_h +\smallfrac12(u_{h,\star}-\beta_0)-\mathcal K (u_{h,\star}-\beta_0)\Big).
\end{equation}

\subsection{Main results}

In order to deal in a simpler way with the analysis of the method \eqref{eq:2.9}, we introduce a global bilinear form:
\begin{eqnarray*}
B((u_h,\lambda_h),(v_h,\mu_h))&:=& a_{\mathrm{DG}}(u_h,v_h)-\langle \lambda_h,v_h\rangle_\Gamma \\
& & +\langle \smallfrac12 u_h-\mathcal K u_h,\mu_h\rangle_\Gamma + \langle \mathcal V \lambda_h,\mu_h\rangle_\Gamma.
\end{eqnarray*}
In principle, this bilinear form is restricted to discrete elements in both components. With some abuse of notation, we will allow the exact solution of \eqref{eq:2.6} to be placed in the first component of this bilinear form. This can be done assuming some additional regularity for this solution and recuperating trace operators for the restrictions of $u$ to $\Gamma$. An important property of this method is its consistency:
\begin{equation}\label{eq:2.13}
B((u_h,\lambda_h),(v_h,\mu_h))=B((u,\lambda),(v_h,\mu_h))\qquad \forall (v_h,\mu_h)\in V_h\times \Lambda_h.
\end{equation}
For analytical purposes we now need to restrict the kind of meshes.We assume that there is a neighborhood of $\Gamma$ where the triangulation $\mathcal T_h$ is quasi-uniform. This fact will be needed for the use of an inverse inequality (see \eqref{eq:3.11} below) on the discrete space $\Lambda_h$. This might be restrictive in some practical applications when $\beta_0$ and $\beta_1$ do not vanish. Nevertheless, there are situations when the transmission problem \eqref{eq:2.1} (or equivalently \eqref{eq:2.6}) originates from cutting off the computational domain for a problem in free space: in this situation the boundary $\Gamma$ can be placed apart from the support of the data function $f$ and $u$ is continuous across $\Gamma$ (that is $\beta_0\equiv 0$ and $\beta_1\equiv 0$); for that case, the local quasi-uniformity of the triangulation might not be very restrictive. It has to be emphasized though, that at present the inverse inequality plays an important role in the coercivity estimates and cannot be easily removed.

The analysis starts with a quasi-coercivity estimate in terms of a discrete seminorm and the bilinear form induced by the operator $\mathcal V$ (see \eqref{eq:3.14}). For convenience, we write:
\[
|u_h|_h^2:=\sum_{e\in \mathcal E_h^\circ} h_e^{-1} \| \jump{u_h}\|_e^2 
\]
The detailed proof of this first result requires the introduction of several elements and the proof of some preliminary estimates. This will be dealt with in Section \ref{sec:3}. The proof of the theorem itself will be given in Section \ref{sec:3.4}.

\begin{theorem}[Coercivity]\label{th:2.1}
There exists a constant $\sigma_0$ depending on characteristics of the mesh (shape-regularity, local quasi-uniformity near $\Gamma$) and the polynomial degree $k$ such that if $\sigma_{\min}\ge \sigma_0$, then
\[
B((u_h,\lambda_h),(u_h,\lambda_h))\ge \smallfrac14\| \nabla_h u_h\|_{\Omega_-}^2+\smallfrac14 |u_h|_h^2+\smallfrac14\langle \lambda_h,\mathcal V\lambda_h\rangle, \qquad \forall (u_h,\lambda_h)\in V_h \times \Lambda_h^{(0)}.
\]
\end{theorem}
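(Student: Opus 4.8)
The plan is to test the bilinear form with $(v_h,\mu_h)=(u_h,\lambda_h)$ and to reproduce, at the discrete and element-localized level, the variational argument of \cite{Say09}. Writing $\mathcal K'$ for the $L^2(\Gamma)$-adjoint of $\mathcal K$, the two off-diagonal contributions collapse into a single boundary term: using the symmetry of the $L^2(\Gamma)$ product and $\langle\mathcal K u_h,\lambda_h\rangle_\Gamma=\langle u_h,\mathcal K'\lambda_h\rangle_\Gamma$,
\[
-\langle\lambda_h,u_h\rangle_\Gamma+\langle\smallfrac12 u_h-\mathcal K u_h,\lambda_h\rangle_\Gamma=-\langle u_h,\smallfrac12\lambda_h+\mathcal K'\lambda_h\rangle_\Gamma .
\]
Since $\lambda_h\in\Lambda_h^{(0)}$, the single layer potential $w:=\mathcal S\lambda_h$ decays as $\mathcal O(1/r)$, is harmonic in $\Omega_-$ and in $\Omega_+$, and its interior conormal derivative obeys the jump relation $\partial_\nu^- w=\smallfrac12\lambda_h+\mathcal K'\lambda_h$ (an identity in $L^2(\Gamma)$, as $\mathcal K'\colon L^2(\Gamma)\to L^2(\Gamma)$ is bounded and $\lambda_h$ is piecewise smooth). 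Hence the cross terms equal $-\langle u_h,\partial_\nu^- w\rangle_\Gamma$, exactly the quantity to which Sayas' integration by parts is applied.

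Next I would localize Green's identity. Because $w$ is harmonic in $\Omega_-$ it is smooth across every interior face, so integrating $\nabla_h u_h\punto\nabla w$ element by element and collecting face contributions gives
\[
-\langle u_h,\partial_\nu^- w\rangle_\Gamma=-(\nabla_h u_h,\nabla w)_{\Omega_-}-\sum_{e\in\mathcal E_h^\circ}\langle\jump{u_h},\partial_\nu w\rangle_e=:-(\nabla_h u_h,\nabla w)_{\Omega_-}+R .
\]
The constant part of $u_h$ is harmless: it produces no jumps and $\int_\Gamma\partial_\nu^- w=0$ by harmonicity. The term $(\nabla_h u_h,\nabla w)_{\Omega_-}$ can now be married to the Dirichlet energy and to $\langle\lambda_h,\mathcal V\lambda_h\rangle=\|\nabla w\|_{\mathbb R^d}^2$ from \eqref{eq:3.14}: using the pointwise bound $|\mathbf a|^2-\mathbf a\punto\mathbf b+|\mathbf b|^2\ge\smallfrac12(|\mathbf a|^2+|\mathbf b|^2)$ with $\mathbf a=\nabla_h u_h$, $\mathbf b=\nabla w$ on $\Omega_-$, and discarding the nonnegative exterior energy $\|\nabla w\|_{\Omega_+}^2$,
\[
\|\nabla_h u_h\|_{\Omega_-}^2-(\nabla_h u_h,\nabla w)_{\Omega_-}+\langle\lambda_h,\mathcal V\lambda_h\rangle\ge\smallfrac12\|\nabla_h u_h\|_{\Omega_-}^2+\smallfrac12\langle\lambda_h,\mathcal V\lambda_h\rangle .
\]
The purely interior IPDG terms are standard: a discrete trace inequality bounds $\|\ave{\partial_\nu u_h}\|_e$ by $h_e^{-1/2}\|\nabla_h u_h\|$ on the two adjacent elements, so a weighted Young inequality together with $\sigma_{\min}\ge\sigma_0$ contributes $-\smallfrac14\|\nabla_h u_h\|_{\Omega_-}^2$ plus a penalty $(\sigma_{\min}-C)\,|u_h|_h^2$, with $C$ depending only on shape regularity and $k$. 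Collecting everything gives $B((u_h,\lambda_h),(u_h,\lambda_h))\ge\smallfrac14\|\nabla_h u_h\|_{\Omega_-}^2+(\sigma_{\min}-C)\,|u_h|_h^2+\smallfrac12\langle\lambda_h,\mathcal V\lambda_h\rangle+R$.

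Everything then reduces to absorbing the localization remainder $R$, and this is where I expect the real work to lie. Cauchy--Schwarz on each face yields $|R|\le|u_h|_h\,\big(\sum_{e}h_e\|\partial_\nu w\|_e^2\big)^{1/2}$, so the crux is the estimate
\[
\sum_{e\in\mathcal E_h^\circ}h_e\,\|\partial_\nu w\|_e^2\le C_R\,\langle\lambda_h,\mathcal V\lambda_h\rangle ,
\]
with $C_R$ depending only on shape regularity, local quasi-uniformity near $\Gamma$ and $k$. I would prove it by a scaling argument, splitting the faces. For faces whose element patch $\omega_e$ stays inside $\Omega_-$, a rigid change of variables (translation, rotation and an isotropic dilation of size $h_e$, all of which preserve harmonicity) maps $\omega_e$ to a fixed reference patch in which $e$ is compactly contained; interior estimates for the harmonic $\widehat w$ then control $\|\partial_{\widehat\nu}\widehat w\|_{\widehat e}^2$ by $\|\widehat\nabla\widehat w\|_{\widehat\omega}^2$, and scaling back gives $h_e\|\partial_\nu w\|_e^2\lesssim\|\nabla w\|_{\omega_e}^2$, summable with finite overlap. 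The genuinely delicate case is that of faces abutting $\Gamma$, where $w$ has only limited fractional regularity up to the boundary and the interior estimate is unavailable; there I would work on a reference boundary configuration reached again by a \emph{rigid} transformation --- anisotropic scaling being forbidden precisely because harmonicity must survive the change of variables and because the relevant fractional-order face norms do not scale multiplicatively --- combining a trace inequality in $H^{1/2+\varepsilon}$-type spaces with the local quasi-uniformity and the inverse inequality \eqref{eq:3.11} on $\Lambda_h$, the latter converting $L^2(\Gamma)$ control of $\lambda_h$ into $\|\lambda_h\|_{-1/2,\Gamma}^2\simeq\langle\lambda_h,\mathcal V\lambda_h\rangle$. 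Granting this, a final weighted Young inequality $|R|\le\smallfrac{s}{2}|u_h|_h^2+\smallfrac{C_R}{2s}\langle\lambda_h,\mathcal V\lambda_h\rangle$ with $s=2C_R$ leaves the coefficient $\smallfrac14$ in front of $\langle\lambda_h,\mathcal V\lambda_h\rangle$ and forces $\sigma_0$ to be large enough that $(\sigma_{\min}-C)-C_R\ge\smallfrac14$, producing the three quarter-weighted lower bounds of the theorem.
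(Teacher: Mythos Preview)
Your overall strategy matches the paper's proof almost step by step: the Sayas substitution $w=\mathcal S\lambda_h$, the jump identities $\partial_\nu^- w=\smallfrac12\lambda_h+\mathcal K'\lambda_h$ and $\langle\lambda_h,\mathcal V\lambda_h\rangle=\|\nabla w\|_{\mathbb R^d}^2$, the element-by-element integration by parts producing the face remainder $R$, the quadratic completion on $\Omega_-$, and the standard IPDG coercivity for $a_{\mathrm{DG}}$ all appear in the paper in essentially the same order. The constants you obtain (the $\smallfrac14$'s) are exactly the ones in the statement.

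The one genuine gap is in the treatment of $R$. Your plan is to use $L^2(e)\times L^2(e)$ Cauchy--Schwarz and then show $\sum_{e\in\mathcal E_h^\circ} h_e\|\partial_\nu w\|_e^2\le C_R\langle\lambda_h,\mathcal V\lambda_h\rangle$. This estimate does not hold with a mesh-independent $C_R$. For faces that meet $\Gamma$ (at a vertex in 2D or an edge in 3D), the single layer potential has only $w\in H^{3/2}(\Omega_-)$ from $\lambda_h\in L^2(\Gamma)$, i.e.\ $\nabla w\in H^{1/2}(\Omega_-)$, which is exactly the borderline where the $L^2$ trace onto a $(d-1)$-face fails. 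Invoking the inverse inequality \eqref{eq:3.11} and the mapping property \eqref{eq:3.13} only raises $\nabla w$ to $H^\varepsilon(\Omega_-)$ for $\varepsilon<1/2$, still short of what an $L^2$ trace needs; pushing to $\nabla w\in H^{1/2+\varepsilon}$ would require $\mathcal S:H^\varepsilon(\Gamma)\to H^{3/2+\varepsilon}(\Omega_-)$, which on polygonal/polyhedral domains is not available because of corner singularities. In short, the side of the pairing you chose to carry the regularity cannot carry it.

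The paper's fix is to shift the fractional burden to the \emph{discrete} side of the pairing. After pulling each face back by the rigid map $F_e$, it bounds
\[
\langle\partial_\nu w,\jump{u_h}\rangle_e
= h_e^{d-2}\langle\partial_\nu(w\circ F_e),\jump{u_h\circ F_e}\rangle_{F_e^{-1}(e)}
\le h_e^{d-2}\|\partial_\nu(w\circ F_e)\|_{-1/2+\varepsilon,F_e^{-1}(e)}\,\|\jump{u_h\circ F_e}\|_{1/2-\varepsilon,F_e^{-1}(e),\sim}.
\]
The weak factor is controlled by the normal-trace estimate for solenoidal $H^\varepsilon$ fields (Lemma~\ref{lemma:3.2}; this is where harmonicity of $w$ and rigidity of $F_e$ enter) and summed via Lemma~\ref{lemma:3.3}, using \eqref{eq:3.12}, \eqref{eq:3.13} and the inverse inequality \eqref{eq:3.11} with $\varepsilon<1/2$. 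The strong factor lives in a fixed finite-dimensional polynomial space on the reference face, so it is equivalent to the $L^2$ norm (Lemma~\ref{lemma:3.4}), which recovers $|u_h|_h$. With this $H^{-1/2+\varepsilon}\times\widetilde H^{1/2-\varepsilon}$ duality in place of your $L^2\times L^2$ step, the final weighted Young inequality you wrote goes through exactly as stated and yields $\sigma_0$.
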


\begin{corollary}[Unique solvability]
Under the hypotheses of Theorem \ref{th:2.1}, the system \eqref{eq:2.9} has a unique solution.
\end{corollary}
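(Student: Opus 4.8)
The plan is to use that \eqref{eq:2.9} is a square linear system posed on the finite-dimensional space $V_h\times\Lambda_h$, so that existence and uniqueness are equivalent; it therefore suffices to show that the homogeneous problem (with $f\equiv0$, $\beta_0\equiv0$, $\beta_1\equiv0$) has only the trivial solution. Written through the global bilinear form $B$, a homogeneous solution $(u_h,\lambda_h)$ satisfies $B((u_h,\lambda_h),(v_h,\mu_h))=0$ for all $(v_h,\mu_h)\in V_h\times\Lambda_h$.

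First I would move $\lambda_h$ into the space $\Lambda_h^{(0)}$ on which coercivity is stated. In the three-dimensional case $\Lambda_h^{(0)}=\Lambda_h$ and there is nothing to do. In the two-dimensional case I would test the first homogeneous equation with $v_h\equiv1$; since $\jump{1}=0$ and $\partial_\nu1=0$, every term of $a_{\mathrm{DG}}(u_h,1)$ vanishes, and with zero right-hand side this gives $\langle\lambda_h,1\rangle_\Gamma=0$, i.e. $\lambda_h\in\Lambda_h^{(0)}$ (consistent with the compatibility condition \eqref{eq:2.2}).

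Next I would run the diagonal coercivity argument: taking $(v_h,\mu_h)=(u_h,\lambda_h)$ and invoking Theorem \ref{th:2.1},
\[
0=B((u_h,\lambda_h),(u_h,\lambda_h))\ge\smallfrac14\|\nabla_h u_h\|_{\Omega_-}^2+\smallfrac14|u_h|_h^2+\smallfrac14\langle\lambda_h,\mathcal V\lambda_h\rangle\ge0,
\]
so all three nonnegative terms vanish. Coercivity of $\mathcal V$ on $\Lambda_h^{(0)}$ (see \eqref{eq:3.14}) then forces $\lambda_h=0$; the vanishing of $\|\nabla_h u_h\|_{\Omega_-}$ makes $u_h$ piecewise constant, and the vanishing of $|u_h|_h$ annihilates every interior jump, so on the connected domain $\Omega_-$ the function $u_h$ reduces to a single global constant $c_h$.

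The one genuine obstacle is that the coercivity seminorm is blind to constants, so $c_h$ is not controlled by the estimate and a separate argument is required; this is exactly the role played by constants flagged in the introduction. I would recover it from the second homogeneous equation, now with $\lambda_h=0$ and $u_h\equiv c_h$: using $\mathcal K1\equiv-\smallfrac12$ gives $\smallfrac12u_h-\mathcal K u_h=c_h$, whence $\langle c_h,\mu_h\rangle_\Gamma=0$ for all $\mu_h\in\Lambda_h$; testing with $\mu_h\equiv1\in\Lambda_h$ yields $c_h|\Gamma|=0$, so $c_h=0$. Thus $(u_h,\lambda_h)=(0,0)$, the homogeneous system is trivial, and \eqref{eq:2.9} is uniquely solvable.
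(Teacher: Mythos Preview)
Your proof is correct and follows essentially the same route as the paper: reduce to the homogeneous problem, test with $v_h\equiv 1$ to place $\lambda_h$ in $\Lambda_h^{(0)}$, apply Theorem~\ref{th:2.1} together with \eqref{eq:3.14} to obtain $\lambda_h=0$ and $u_h\equiv c_h$, and then kill the constant via the second equation. The only cosmetic difference is that the paper cites the decomposition \eqref{eq:2.11}--\eqref{eq:2.12} for the last step, whereas you redo that computation directly (which is exactly how \eqref{eq:2.12} is derived).
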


\begin{proof}
Since \eqref{eq:2.9} can be equivalently written as a square system of linear equations, only the uniqueness part is needed. If $(u_h,\lambda_h)\in V_h\times \Lambda_h$ is a homogeneous solution of \eqref{eq:2.9}, then testing with $v_h\equiv 1$ shows that $\lambda_h \in \Lambda_h^{(0)}$. Theorem \ref{th:2.1} and \eqref{eq:3.14} show then that $\lambda_h\equiv 0$ and $u_h\equiv c$. The decomposition \eqref{eq:2.11}-\eqref{eq:2.12} shows finally that $u_h\equiv 0$.
\end{proof}

Convergence estimates will be only given for a solution of the highest regularity. For the interior field $u$, we will consider
\[
 H^{m}(\mathcal T_h):=\prod_{K\in \mathcal T_h} H^{m}(K),\qquad | u|_{m,\mathcal T_h}^2:=\sum_{K\in \mathcal T_h} | u|_{m,K}^2.
\]
For the boundary unknown $\lambda$, we consider the set of edges/faces $\{\Gamma_1,\ldots,\Gamma_M\}$ of the polygon/polyhedron $\Gamma$ and the broken (but not discrete) Sobolev spaces
\[
X^m(\Gamma):=\prod_{\ell=1}^M H^m(\Gamma_\ell),
\]
endowed with the product norms.

\begin{theorem}[Convergence]\label{th:2.3}
Assume that the solution of \eqref{eq:2.6} is in $H^{k+1}(\mathcal T_h)\times X^{k+1}(\Gamma)$. Then, the error of the method of \eqref{eq:2.9} can be bounded as
\[
\| u-u_h\|_{\Omega_-} + 
\| \nabla u-\nabla_h u_h\|_{\Omega_-}+|u_h|_h+\|\lambda-\lambda_h\|_{-1/2,\Gamma} \le C h^{k} \left( |u|_{k+1,\mathcal T_h}+|\lambda|_{k+1,\Gamma}\right).
\]
Moreover, for all $\mathbf x\in \Omega_+$,
\begin{equation}\label{eq:2.16}
| u_+(\mathbf x)-u_{+,h}(\mathbf x)| \le C h^{k} \| \Phi(\mathbf x,\punto)\|_{2,\Omega_-}  \left( |u|_{k+1,\mathcal T_h}+|\lambda|_{k+1,\Gamma}\right).
\end{equation}
\end{theorem}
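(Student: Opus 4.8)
The plan is to derive the convergence estimate from the discrete coercivity of Theorem~\ref{th:2.1} together with the consistency property~\eqref{eq:2.13}, following the standard \emph{C\'ea-type} strategy for non-conforming Galerkin methods. I would first introduce a norm adapted to the left-hand side of the coercivity estimate, namely
\[
\triple{(v_h,\mu_h)}^2 := \|\nabla_h v_h\|_{\Omega_-}^2 + |v_h|_h^2 + \langle \mu_h,\mathcal V\mu_h\rangle_\Gamma,
\]
so that Theorem~\ref{th:2.1} reads $B((v_h,\mu_h),(v_h,\mu_h))\ge \tfrac14 \triple{(v_h,\mu_h)}^2$ for all $(v_h,\mu_h)\in V_h\times \Lambda_h^{(0)}$. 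The first real step is then to estimate the error in this triple norm. Writing $e_h := u_h - w_h$ and $\varepsilon_h := \lambda_h - \rho_h$ for an arbitrary discrete pair $(w_h,\rho_h)\in V_h\times \Lambda_h^{(0)}$ (to be chosen later as interpolants/projections of $(u,\lambda)$), coercivity and consistency give
\[
\tfrac14 \triple{(e_h,\varepsilon_h)}^2 \le B((e_h,\varepsilon_h),(e_h,\varepsilon_h)) = B((u,\lambda),(e_h,\varepsilon_h)) - B((w_h,\rho_h),(e_h,\varepsilon_h)) = B((u-w_h,\lambda-\rho_h),(e_h,\varepsilon_h)),
\]
where the middle equality uses~\eqref{eq:2.13}. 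It therefore remains to bound $B((u-w_h,\lambda-\rho_h),(e_h,\varepsilon_h))$ by $\triple{(e_h,\varepsilon_h)}$ times approximation quantities.

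This boundedness is precisely where the stronger-norm issue flagged in the introduction becomes the main obstacle. The bilinear form $B$ is \emph{not} bounded in the triple norm because that norm contains no control of the boundary value of the first argument, yet $B$ couples $u-w_h$ to $\varepsilon_h$ through the term $-\langle \lambda-\rho_h, \,\cdot\,\rangle$ is fine, but the term $\langle \tfrac12(u-w_h)-\mathcal K(u-w_h),\varepsilon_h\rangle_\Gamma$ and the DG consistency terms involving averages of normal derivatives require a norm on $u-w_h$ that sees traces and elementwise $H^2$ behaviour. My plan is thus to establish an auxiliary \emph{continuity} bound of the form
\[
|B((\phi,\psi),(e_h,\varepsilon_h))| \le C\,\|(\phi,\psi)\|_{\star}\,\triple{(e_h,\varepsilon_h)},
\]
where $\|(\phi,\psi)\|_\star$ is a stronger mesh-dependent norm including $\|\nabla_h\phi\|_{\Omega_-}$, $|\phi|_h$, face terms $\sum_e h_e\|\ave{\partial_\nu\phi}\|_e^2$ controlling the normal-derivative couplings, a boundary trace contribution $\|\gamma\phi\|_{1/2,\Gamma}$ (or a discrete surrogate thereof) to absorb the $\mathcal K$ term via boundedness of $\mathcal K:L^2(\Gamma)\to L^2(\Gamma)$ and continuity of $\mathcal V$, and $\|\psi\|_{-1/2,\Gamma}$. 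The inverse inequality from the assumed local quasi-uniformity near $\Gamma$ is what lets the $\langle \varepsilon_h,\,\cdot\,\rangle_\Gamma$ and related boundary couplings be reabsorbed, after a weighted Young inequality, into $\triple{(e_h,\varepsilon_h)}^2$; this is the delicate tuning the authors warn about. Once this continuity estimate is in hand, the displayed chain gives $\triple{(e_h,\varepsilon_h)}\le C\,\|(u-w_h,\lambda-\rho_h)\|_\star$.

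The remaining steps are then routine approximation theory. I would choose $w_h$ to be the elementwise $L^2$- or Scott--Zhang-type polynomial interpolant of $u$ and $\rho_h$ to be a suitable projection of $\lambda$ into $\Lambda_h^{(0)}$ (using the stated regularity $u\in H^{k+1}(\mathcal T_h)$ and $\lambda\in X^{k+1}(\Gamma)$), and invoke standard local approximation estimates to bound each constituent of $\|(u-w_h,\lambda-\rho_h)\|_\star$ by $C h^{k}(|u|_{k+1,\mathcal T_h}+|\lambda|_{k+1,\Gamma})$; the face and boundary terms pick up the appropriate powers of $h_e$ via trace inequalities, and the $H^{-1/2}$ and $H^{1/2}$ boundary norms are handled by fractional-order approximation together with~\eqref{eq:3.14}. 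Combining with the triangle inequality $\triple{(u_h-u,\lambda_h-\lambda)}\le \triple{(e_h,\varepsilon_h)} + \triple{(u-w_h,\lambda-\rho_h)}$ yields the estimate for $\|\nabla u-\nabla_h u_h\|_{\Omega_-}+|u_h|_h+\|\lambda-\lambda_h\|_{-1/2,\Gamma}$, after using~\eqref{eq:3.14} to convert the $\mathcal V$-norm into $\|\cdot\|_{-1/2,\Gamma}$. The $L^2(\Omega_-)$ bound $\|u-u_h\|_{\Omega_-}$ needs an extra argument for the constant component $c_h$: I would use the decomposition~\eqref{eq:2.11}--\eqref{eq:2.12}, bound $u_{h,\star}-u_\star$ by a discrete Poincar\'e/Friedrichs inequality in terms of $\|\nabla_h(\cdot)\|_{\Omega_-}+|\cdot|_h$, and control the difference of constants through~\eqref{eq:2.12} and the already-established boundary estimates. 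Finally, the pointwise exterior bound~\eqref{eq:2.16} follows by writing $u_+(\mathbf x)-u_{+,h}(\mathbf x) = \mathcal D(\gamma u-u_h)(\mathbf x)-\mathcal S(\lambda-\lambda_h)(\mathbf x)$, representing the potentials against $\Phi(\mathbf x,\,\cdot\,)$ and its normal derivative, and applying duality in $H^{1/2}(\Gamma)\times H^{-1/2}(\Gamma)$ so that the smoothing factor $\|\Phi(\mathbf x,\,\cdot\,)\|_{2,\Omega_-}$ multiplies the already-bounded interface errors.
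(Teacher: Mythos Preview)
Your overall architecture (coercivity $+$ consistency $\Rightarrow$ C\'ea-type quasi-optimality with a stronger norm on the first slot, then approximation, then separate treatment of the constant, then exterior representation) is exactly the paper's route. However, two steps in your plan would not go through as written.

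\medskip

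\textbf{The continuity bound and the term $-\langle\lambda-\rho_h,e_h\rangle_\Gamma$.} You assert this term ``is fine'', but $\triple{(e_h,\varepsilon_h)}$ contains no control of $e_h|_\Gamma$. To bound $|\langle\psi,e_h\rangle_\Gamma|$ by $\triple{(e_h,\varepsilon_h)}$ you must pass through a discrete Poincar\'e--Friedrichs inequality, and that forces the test function to have zero mean. The paper handles this by testing not with $(e_h,\varepsilon_h)$ but with its zero-mean part $(u_{h,\star}-\Pi_h u_\star,\lambda_h-P_h\lambda)\in V_h^\star\times\Lambda_h$, after checking separately that the constant contribution $(c_h-c,0)$ drops out of $B$ because $\mathcal K1=-\tfrac12$ and $\int_\Gamma(\lambda_h-P_h\lambda)=0$. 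In other words, the decomposition \eqref{eq:2.11}--\eqref{eq:2.12} is needed already at the continuity/quasi-optimality stage, not only when you later recover $\|u-u_h\|_{\Omega_-}$. The same difficulty recurs when you estimate $c-c_h$ from \eqref{eq:2.12}: the term $\langle \tfrac12-\mathcal K^t1,\,u_{h,\star}-\Pi_h u_\star\rangle_\Gamma$ again pairs a boundary density with a DG function whose trace you do not control; the paper's Lemma~\ref{lemma:4.3} resolves this by introducing an auxiliary interior Neumann problem with $\partial_\nu\widetilde u\equiv 1$ and integrating by parts element by element.

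\medskip

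\textbf{The exterior pointwise estimate.} Pure $H^{1/2}(\Gamma)\times H^{-1/2}(\Gamma)$ duality does not work for the double-layer contribution, because $u_h|_\Gamma$ is only in $L^2(\Gamma)$ (it is discontinuous), so $\|\gamma u-u_h\|_{1/2,\Gamma}$ is not available. The paper instead splits $\gamma u-u_h=(\gamma u-\Pi_h u)+(\Pi_h u-u_h)$, bounds the first piece directly in $L^2(\Gamma)$, and for the second transmits $\langle\partial_\nu\Phi(\mathbf x,\cdot),\Pi_h u-u_h\rangle_\Gamma$ to the interior via element-by-element integration by parts, using that $\Phi(\mathbf x,\cdot)$ is harmonic in $\Omega_-$. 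This converts the boundary pairing into $(\nabla\Phi(\mathbf x,\cdot),\nabla_h v_h)_{\Omega_-}-\sum_{e\in\mathcal E_h^\circ}\langle\partial_\nu\Phi(\mathbf x,\cdot),\jump{v_h}\rangle_e$, which is controlled by $\|\Phi(\mathbf x,\cdot)\|_{2,\Omega_-}\,\triple{(v_h,0)}_h$. This ``transmit to the interior'' device is precisely the mechanism that produces the factor $\|\Phi(\mathbf x,\cdot)\|_{2,\Omega_-}$ in \eqref{eq:2.16}; your duality argument would not.
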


 As will be made clear by the proof of this result (Section \ref{sec:4.3}), the estimates of Theorem \ref{th:2.3} can also be given in more local terms both for $u$ and $\lambda$, using local meshsizes. That part of the analysis is just related to approximation properties and we will not insist on it. Note that the bound $|u_h|_h=\mathcal O(h^k)$ takes into account how close $u_h$ gets to be continuous.

\section{Discrete coercivity}\label{sec:3}

\subsection{A geometric construction}

Part of the forthcoming analysis hinges on a class of rigid scaling arguments in order to deal with solenoidal vector fields and fractional order Sobolev spaces with different scalability properties.
We assume the existence of a finite set of reference configurations with the following characteristics:
\begin{itemize}
\item In the two-dimensional case $\widehat e:=(0,1)\times \{0\}$ and $\widehat K$ is a fixed isosceles triangle with $\widehat e$ as base and such that its two equal angles are less than half the minimum angle of the sequence of triangulations $\mathcal T_h$. By rotation, translation and scaling, we can place an isosceles triangle $K_e$, congruent to $\widehat K$, with base on every $e\in\mathcal E_h^\circ$ and such that $K_e\subset\Omega_-$. In order to have unified notation with the three-dimensional case we will write $\widehat{\mathcal K}:=\{\widehat K\}$.
\item In the three-dimensional case we take a set of pyramids $\widehat{\mathcal K}:=\{ \widehat K_1,\ldots,\widehat K_\ell\}$ with respective bases $\{ \widehat e_1,\ldots,\widehat e_\ell\}$, with $\widehat e_j\subset \mathbb R^2\times \{0\}$. We assume that every interior face of the triangulation $e\in\mathcal E_h^\circ$, with diameter $h_e$, can be extended to be the base of a pyramid $K_e \subset \Omega_-$, such that $h_e^{-1}K_e$ is a rigid motion of one of the elements of  $\widehat{\mathcal K}$.
\end{itemize} 
Given now any $e\in \mathcal E_h^\circ$, we consider an invertible affine map $F_e:\mathbb R^d\to \mathbb R^d$ of the form
\[
F_e(\mathbf x):=h_e \Theta_e \mathbf x+\mathbf b_e, \qquad \Theta_e^\top\Theta_e=\mathrm I_d,
\]
such that $F_e^{-1}(K_e)\in \widehat{\mathcal K}$ and $F_e^{-1}(e)$ is contained in the base of $F_e^{-1}(K_e)$. Note that in the two-dimensional case, this implies that $F_e^{-1}(e)=\widehat e$, whereas in the three-dimensional case  $F_e^{-1}(e)$ is a triangle contained in the base of $F_e^{-1}(K_e)$. By construction, there exists $C_\circ>0$ (in the two-dimensional case $C_\circ=1$) such that
\begin{equation}\label{eq:3.1}
\sum_{e\in \mathcal E_h^\circ}\| v\|_{K_e}^2\le C_\circ \|v\|_{\Omega_-}^2\qquad \forall v \in L^2(\Omega_-).
\end{equation}

\subsection{Fractional order Sobolev norms in reference configurations}

Given a bounded open set $\mathrm O \subset \mathbb R^d$ (or the closure of a bounded open set) and $s\in (0,1)$ we consider the space $H^s(\mathrm O)$, endowed with the Sobolev-Slobodetskij  norm
\begin{equation}\label{eq:3.2}
\| v\|_{s,\mathrm O}^2:=\| v\|_{\mathrm O}^2+|v|_{s,\mathrm O}^2, \qquad |v|_{s,\mathrm O}^2:=\int_{\mathrm O}\int_{\mathrm O}\frac{|v(\mathbf x)-v(\mathbf y)|^2}{|\mathbf x-\mathbf y|^{2s+d}}\mathrm d\mathbf x\mathrm d\mathbf y
\end{equation}
and the space $\widetilde H^s(\mathrm O)$, endowed with the norm
\begin{equation}\label{eq:3.3}
\| v\|^2_{s,\mathrm O,\sim}:=|v|_{s,\mathrm O}^2+\int_{\mathrm O}\frac{|v(\mathbf x)|^2}{\mathrm{dist}(\mathbf x,\partial\mathrm O)^{2s}}\mathrm d\mathbf x.
\end{equation}
The negative order spaces are defined by duality, considering the duality representation that pivots around $L^2(\mathrm O)$,
\[
\widetilde H^{-s}(\mathrm O):=H^s(\mathrm O)', \qquad H^{-s}(\mathrm O):=\widetilde H^s(\mathrm O)',
\]
and endowed with the dual norms:
\begin{equation}\label{eq:3.4}
\| v\|_{-s,\mathrm O}:=\sup_{u\in \widetilde H^s(\mathrm O)}\frac{(u,v)_{\mathrm O}}{\| u\|_{s,\mathrm O,\sim}}, \qquad
\| v\|_{-s,\mathrm O,\sim}:=\sup_{u\in H^s(\mathrm O)}\frac{(u,v)_{\mathrm O}}{\| u\|_{s,\mathrm O}}. 
\end{equation}
For open polyhedral surfaces (polygonal curves), we can consider the spaces $H^s(S)$ and $\widetilde H^s(S)$ for $-1<s<1$. For closed polyhedral surfaces $\Gamma$, the spaces $H^s(\Gamma)$ for $0<s<1$ are defined by parametrization and $H^{-s}(\Gamma):=H^s(\Gamma)'$, pivotal to $L^2(\Gamma)$.

Since for  $\varepsilon \in (0,1/2)$ the trace operator $H^{1-\varepsilon}(\widehat K) \to H^{1/2-\varepsilon}(\partial\widehat K)$ is bounded and surjective, it has a bounded right inverse $L_\varepsilon$ and we can bound, with a constant $C_\varepsilon^{\circled1}>0$ that depends only on $\varepsilon$,
\begin{equation}\label{eq:3.5}
\| \nabla (L_\varepsilon\varphi)\|_{-\varepsilon,\widehat K}\le C_\varepsilon^{\circled1}\|\varphi\|_{1/2-\varepsilon,\partial\widehat K} \qquad \forall \varphi \in  H^{1/2-\varepsilon}(\partial\widehat K), \quad \widehat K\in\widehat{\mathcal K}.
\end{equation}
Since $H^s(\widehat K)\cong \widetilde H^s(\widehat K)$ for $0<s<1/2$, with equivalent but different norms, for $\varepsilon\in (-1/2,1/2)$, we can identify constants $C_\varepsilon^{\circled2}>0$ depending only on $\varepsilon$ such that
\begin{equation}\label{eq:3.6}
\| v\|_{-\varepsilon,\widehat K,\sim}\le C_\varepsilon^{\circled2}\|v\|_{-\varepsilon,\widehat K} \qquad \forall v\in H^{-\varepsilon}(\widehat K),\qquad \widehat K \in \widehat{\mathcal K}.
\end{equation}
The third group of inequalities requires some additional work in the three-dimensional case.

\begin{lemma}\label{lemma:3.1} For $\varepsilon\in (0,1/2)$, there exists $C_\varepsilon^{\circled3}>0$ such that
\begin{equation}\label{eq:3.7}
\| \widetilde\varphi\|_{1/2-\varepsilon,\partial\widehat K}\le C_\varepsilon^{\circled3} \| \varphi\|_{1/2-\varepsilon,D,\sim} \qquad \forall \varphi \in \widetilde H^{1/2-\varepsilon}(D), \quad \widehat K \in \widehat{\mathcal K},
\end{equation}
where $D=\widehat e$ if $d=2$ and $D$ is any triangle contained in the base of $\widehat K$ if $d=3$, and $\widetilde\varphi$ is the extension by zero of $\varphi$.
\end{lemma}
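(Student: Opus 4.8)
Throughout I write $s:=1/2-\varepsilon\in(0,1/2)$ and $R:=\partial\widehat K\setminus D$, so that $\widetilde\varphi=\varphi$ on $D$ and $\widetilde\varphi\equiv 0$ on $R$. Since $\partial\widehat K$ is a fixed Lipschitz polyhedral surface (a finite union of flat faces), the norm on $H^s(\partial\widehat K)$ defined by parametrization is equivalent, with constants depending only on $\widehat K$, to the intrinsic Sobolev--Slobodetskij norm built from the restriction of the Euclidean distance; I work with the latter. The plan is to estimate the Gagliardo double integral of $\widetilde\varphi$ directly, exploiting that the zero extension annihilates the contribution of $R$. For the $L^2$ part, $\|\widetilde\varphi\|_{\partial\widehat K}=\|\varphi\|_D$, and because $\mathrm{dist}(\mathbf x,\partial D)\le\mathrm{diam}(\widehat K)$ the weighted term in \eqref{eq:3.3} already dominates it: $\|\varphi\|_D^2\le \mathrm{diam}(\widehat K)^{2s}\|\varphi\|_{s,D,\sim}^2$.

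It then remains to control the seminorm $|\widetilde\varphi|_{s,\partial\widehat K}^2$. Splitting the domain of integration into $D\times D$, $R\times R$ and the two symmetric cross regions $D\times R$, the $R\times R$ piece vanishes and the $D\times D$ piece equals $|\varphi|_{s,D}^2\le\|\varphi\|_{s,D,\sim}^2$. Writing $n:=d-1$ for the dimension of the surface, the cross contribution is
\[
2\int_D|\varphi(\mathbf x)|^2\,g(\mathbf x)\,\mathrm d\sigma(\mathbf x),\qquad g(\mathbf x):=\int_R\frac{\mathrm d\sigma(\mathbf y)}{|\mathbf x-\mathbf y|^{2s+n}} .
\]
Thus the lemma reduces to the pointwise estimate $g(\mathbf x)\le C\,\mathrm{dist}(\mathbf x,\partial D)^{-2s}$ for $\mathbf x$ in the interior of $D$: granting it, the cross term is bounded by $2C$ times the weighted integral in \eqref{eq:3.3}, and collecting the three pieces yields \eqref{eq:3.7}.

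To prove the bound on $g$, set $\delta:=\mathrm{dist}(\mathbf x,\partial D)$. The analytic ingredient is the flat-piece estimate: for any hyperplane $\Pi\subset\mathbb R^d$ and any radius $\rho>0$, one has $\int_{\Pi\cap\{|\mathbf x-\mathbf y|\ge\rho\}}|\mathbf x-\mathbf y|^{-(2s+n)}\,\mathrm d\sigma(\mathbf y)\le C_s\,\rho^{-2s}$, a one-dimensional computation valid uniformly in the position of $\mathbf x$ relative to $\Pi$, and in which the condition $s>0$ (that is, $\varepsilon<1/2$) is exactly what makes the integral converge and produces the power $\rho^{-2s}$. Since $R$ is a finite union of flat faces carried by hyperplanes $\Pi_i$, it then suffices to establish the geometric comparison $\mathrm{dist}(\mathbf x,R)\ge c\,\mathrm{dist}(\mathbf x,\partial D)$ with $c>0$ depending only on $\widehat K$; indeed this gives $R\subset\{|\mathbf x-\mathbf y|\ge c\delta\}$, whence $g(\mathbf x)\le\sum_i\int_{\Pi_i\cap\{|\mathbf x-\mathbf y|\ge c\delta\}}|\mathbf x-\mathbf y|^{-(2s+n)}\le C\,\delta^{-2s}$, the sum running over the finitely many faces.

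The main obstacle is precisely this geometric comparison, and it is where the two dimensions differ. The coplanar part of $R$ (empty when $d=2$; the base minus $D$ when $d=3$) lies in the plane of $D$ and is separated from the interior point $\mathbf x$ by $\partial D$, so it already satisfies the comparison with $c=1$. The remaining faces (the two slanted edges when $d=2$, the lateral faces of the pyramid when $d=3$) meet $D$ only along edges or vertices, and there the comparison constant is governed by the fixed dihedral angles and the shape of $\widehat K$. The one genuinely delicate point is that in three dimensions $D$ ranges over all triangles contained in the base, so $c$ must be shown independent of $D$; this follows from the explicit flat geometry, and since $\widehat{\mathcal K}$ contains only finitely many reference elements, a single $c>0$, and hence a single constant $C_\varepsilon^{\circled3}$, serves for all configurations.
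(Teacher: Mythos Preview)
Your argument is correct, but it follows a slightly different route from the paper. The paper proceeds in two stages: first (only needed when $d=3$) it extends $\varphi$ by zero from $D$ to the base $\widehat e$, bounding the Gagliardo cross term $\int_D |\varphi(\mathbf y)|^2\int_{\widehat e\setminus D}|\mathbf x-\mathbf y|^{-(2s+2)}\,\mathrm d\mathbf x\,\mathrm d\mathbf y$ by the polar--coordinate computation you also use, obtaining $\|\widetilde\varphi\|_{s,\widehat e,\sim}^2\le(2\pi/s+1)\|\varphi\|_{s,D,\sim}^2$ with a constant manifestly independent of $D$; second, it invokes the standard fact that extension by zero is bounded $\widetilde H^s(\widehat e)\to H^s(\partial\widehat K)$, a fixed reference--element estimate. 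In contrast you extend directly from $D$ to all of $\partial\widehat K$ and treat the lateral faces by the same flat--piece integral, at the price of needing the geometric comparison $\mathrm{dist}(\mathbf x,R)\ge c\,\mathrm{dist}(\mathbf x,\partial D)$ uniformly in $D$.

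That comparison, which you assert ``follows from the explicit flat geometry'', is indeed true but deserves one line of justification: for a lateral face $F$ with base edge $E_F\subset\partial\widehat e$, the convex base $\widehat e$ (hence $D$) lies on one side of the line $\ell_F\supset E_F$, so the open ball $B(\mathbf x,\mathrm{dist}(\mathbf x,\partial D))\subset D^\circ$ misses $\ell_F$, whence $\mathrm{dist}(\mathbf x,\ell_F)\ge\mathrm{dist}(\mathbf x,\partial D)$; then $\mathrm{dist}(\mathbf x,F)\ge\mathrm{dist}(\mathbf x,\Pi_F)=\sin(\theta_F)\,\mathrm{dist}(\mathbf x,\ell_F)$ with $\theta_F$ the dihedral angle. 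Thus $c=\min_F\sin\theta_F$ depends only on $\widehat K$. With this filled in, your proof is complete and more self--contained than the paper's; the paper's two--stage version buys simplicity by confining the explicit computation to the plane (where $c=1$ is automatic) and delegating the rest to a cited result.
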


\begin{proof}
The first part of the proof is only required for the three-dimensional case. Let $\widehat e$ be the base of $\widehat K$ and $D$ be any triangle contained in $\widehat e$. Then
\begin{eqnarray*}
|\widetilde\varphi|_{s,\widehat e}^2&=&|\varphi|_{s,D}^2 + 2 \int_D \left( \int_{\widehat e\setminus D} \frac{\mathrm d\mathbf x}{|\mathbf x-\mathbf y|^{2s+2}}\right) |\varphi(\mathbf y)|^2\mathrm d\mathbf y\\
&\le & |\varphi|_{s,D}^2 + 2 \int_D \left( 2\pi \int_{\mathrm{dist}(\mathbf y,\partial D)}^\infty \frac{\mathrm dr}{r^{2s+1}} \right)  |\varphi(\mathbf y)|^2\mathrm d\mathbf y\\
& \le &  |\varphi|_{s,D}^2 +\frac{2\pi}{s} \int_D \frac{|\varphi(\mathbf y)|^2}{\mathrm{dist}(\mathbf y,\partial D)^{2s}}\mathrm d\mathbf y,
\end{eqnarray*}
and therefore
\[
\| \widetilde\varphi\|_{s,\widehat e,\sim}^2 \le \left(\frac{2\pi}{s}+1\right) \| \varphi\|_{s,D,\sim}^2,
\]
where the key fact is that the constant does not depend on $D$.
In the second step, we apply that extension-by-zero is a bounded operator from $\widetilde H^{s}(\widehat e)$ to $H^s(\partial\widehat K)$ and take $s=1/2-\varepsilon$.
\end{proof}

\begin{lemma}\label{lemma:3.2}
For $\varepsilon \in (0,1/2)$, there exists $C_\varepsilon^{\circled4}>0$ such that
\begin{equation}\label{eq:3.8}
\| \mathbf v\cdot\mathbf n\|_{-1/2+\varepsilon,D}\le C_\varepsilon^{\circled4}\| \mathbf v\|_{\varepsilon,\widehat K}\quad \forall \mathbf v \in H^\varepsilon(\widehat K)^d \mbox{ such that } \mathrm{div}\,\mathbf v=0
\end{equation}
where $D=\widehat e$ if $d=2$ and $D$ is any triangle contained in the base of $\widehat K$ if $d=3$.
\end{lemma}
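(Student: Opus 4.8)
The plan is to prove the estimate by duality, reading $H^{-1/2+\varepsilon}(D)$ through its definition \eqref{eq:3.4} as the dual of $\widetilde H^{1/2-\varepsilon}(D)$, and to transfer a test function from $D$ to the whole boundary $\partial\widehat K$, lift it into $\widehat K$, and integrate by parts so that the divergence-free constraint can be used. Concretely, by \eqref{eq:3.4},
\[
\|\mathbf v\cdot\mathbf n\|_{-1/2+\varepsilon,D}=\sup_{\varphi\in\widetilde H^{1/2-\varepsilon}(D)}\frac{(\varphi,\mathbf v\cdot\mathbf n)_D}{\|\varphi\|_{1/2-\varepsilon,D,\sim}},
\]
so it suffices to bound $(\varphi,\mathbf v\cdot\mathbf n)_D$ by $C\,\|\mathbf v\|_{\varepsilon,\widehat K}\,\|\varphi\|_{1/2-\varepsilon,D,\sim}$. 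Given $\varphi$, I would denote by $\widetilde\varphi$ its extension by zero to $\partial\widehat K$ and by $w:=L_\varepsilon\widetilde\varphi\in H^{1-\varepsilon}(\widehat K)$ the lifting provided by the bounded right inverse of the trace already used in \eqref{eq:3.5}, so that $\gamma w=\widetilde\varphi$ on $\partial\widehat K$.

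Second, first for smooth $\mathbf v$ and then by density, Green's identity together with $\mathrm{div}\,\mathbf v=0$ gives
\[
(\varphi,\mathbf v\cdot\mathbf n)_D=\langle\widetilde\varphi,\mathbf v\cdot\mathbf n\rangle_{\partial\widehat K}=\int_{\widehat K}\mathbf v\cdot\nabla w,
\]
since $\widetilde\varphi$ vanishes on $\partial\widehat K\setminus D$. This volume integral is then estimated as a duality product between $H^\varepsilon(\widehat K)$ and its dual, using the second norm in \eqref{eq:3.4}:
\[
\Big|\int_{\widehat K}\mathbf v\cdot\nabla w\Big|\le \|\mathbf v\|_{\varepsilon,\widehat K}\,\|\nabla w\|_{-\varepsilon,\widehat K,\sim}.
\]
Now the three preparatory estimates close the argument in a chain: the norm equivalence \eqref{eq:3.6} converts $\|\nabla w\|_{-\varepsilon,\widehat K,\sim}$ into $C_\varepsilon^{\circled2}\|\nabla w\|_{-\varepsilon,\widehat K}$; the lifting bound \eqref{eq:3.5} gives $\|\nabla w\|_{-\varepsilon,\widehat K}\le C_\varepsilon^{\circled1}\|\widetilde\varphi\|_{1/2-\varepsilon,\partial\widehat K}$; and finally Lemma \ref{lemma:3.1}, that is \eqref{eq:3.7}, yields $\|\widetilde\varphi\|_{1/2-\varepsilon,\partial\widehat K}\le C_\varepsilon^{\circled3}\|\varphi\|_{1/2-\varepsilon,D,\sim}$. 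Collecting constants proves the claim with $C_\varepsilon^{\circled4}=C_\varepsilon^{\circled1}C_\varepsilon^{\circled2}C_\varepsilon^{\circled3}$.

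I expect the delicate points to be exactly the reasons the two preliminary lemmas were isolated. On the one hand, the lifting estimate \eqref{eq:3.5} naturally produces the \emph{non-$\sim$} negative norm, whereas the duality pairing with $\mathbf v\in H^\varepsilon$ requires the \emph{$\sim$} one, so the identification \eqref{eq:3.6} (valid precisely because $\varepsilon<1/2$, where $H^\varepsilon\cong\widetilde H^\varepsilon$) is indispensable. On the other hand, some care is needed to make the boundary pairing $\langle\widetilde\varphi,\mathbf v\cdot\mathbf n\rangle_{\partial\widehat K}$ meaningful: since $\widetilde\varphi$ only lies in $H^{1/2-\varepsilon}(\partial\widehat K)$, below the $H^{1/2}$ regularity with which the usual $H(\mathrm{div})$ normal trace pairs, one should \emph{define} $\mathbf v\cdot\mathbf n$ on $D$ through the integration-by-parts identity itself and check, using $\mathrm{div}\,\mathbf v=0$ and density of smooth fields, that it is independent of the chosen lifting $w$. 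It is precisely the divergence-free hypothesis that upgrades the normal trace from $H^{-1/2}$ to $H^{-1/2+\varepsilon}$, matching the gain of $\varepsilon$ derivatives carried by $\mathbf v$.
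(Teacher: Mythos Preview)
Your proof is correct and follows essentially the same approach as the paper's own proof: test against $\varphi\in\widetilde H^{1/2-\varepsilon}(D)$, extend by zero to $\partial\widehat K$, lift with $L_\varepsilon$, integrate by parts using $\mathrm{div}\,\mathbf v=0$, and then chain \eqref{eq:3.6}, \eqref{eq:3.5}, and Lemma~\ref{lemma:3.1} to obtain $C_\varepsilon^{\circled4}=C_\varepsilon^{\circled1}C_\varepsilon^{\circled2}C_\varepsilon^{\circled3}$. The only cosmetic difference is that the paper invokes Green's theorem via $\mathbf v\in H(\mathrm{div},\widehat K)$ directly, whereas you argue by density of smooth fields; both justifications are fine.
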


\begin{proof}
Let $\varphi\in \widetilde H^{1/2-\varepsilon}(D)$,  $\widetilde\varphi$ be the extension by zero of $\varphi$ to the rest of $\partial \widehat K$, and $u:=L_\varepsilon \widetilde\varphi$, where $L_\varepsilon$ is the trace lifting of \eqref{eq:3.5}. Since the hypotheses on $\mathbf v$ imply that $\mathbf v\in H(\mathrm{div},\widehat K)$, we can apply Green's Theorem and prove that
\begin{eqnarray*}
\langle \mathbf v\cdot\mathbf n,\varphi\rangle_D &=& \langle \mathbf v\cdot\mathbf n,\widetilde\varphi\rangle_{\partial\widehat K}=(\mathrm{div}\,\mathbf v,u)_{\widehat K}+(\mathbf v,\nabla u)_{\widehat K}\\
&\le & \| \mathbf v\|_{\varepsilon,\widehat K}\|\nabla u\|_{-\varepsilon,\widehat K,\sim}\le C_\varepsilon^{\circled2}
\| \mathbf v\|_{\varepsilon,\widehat K}\|\nabla u\|_{-\varepsilon,\widehat K}\\
&\le & C_\varepsilon^{\circled1}C_\varepsilon^{\circled2} \| \mathbf v\|_{\varepsilon,\widehat K}\|\widetilde\varphi\|_{1/2-\varepsilon,\partial\widehat K}\le  C_\varepsilon^{\circled1}C_\varepsilon^{\circled2} C_\varepsilon^{\circled3}
 \| \mathbf v\|_{\varepsilon,\widehat K}\| \varphi\|_{1/2-\varepsilon,D,\sim},
\end{eqnarray*}
where we have applied \eqref{eq:3.6}, \eqref{eq:3.6} and Lemma \ref{lemma:3.1}. The proof is now a direct consequence of the definition of the $H^{-1/2+\varepsilon}(D)$ norm.
\end{proof}

\subsection{Two key lemmas}

 We first divide the set of edges as
\begin{equation}\label{eq:3.9}
\mathcal E_h^{\mathrm{strip}}:=\{ e \in \mathcal E_h^\circ \,:\, \mathrm{dist}(e,\Gamma) \le C\} \qquad\mbox{and}\qquad \mathcal E_h^{\mathrm{int}}:=\mathcal E_h^\circ\setminus\mathcal E_h^{\mathrm{strip}}.
\end{equation}
If $C$ above is large enough (or the triangulation is refined enough), we can fit
\begin{equation}\label{eq:3.10}
\cup\{ K_e \,:\, e\in \mathcal E_h^{\mathrm{int}}\} \subset \overline\Omega_{\mathrm{int}} \subset \Omega_-.
\end{equation}
Since we are assuming that the triangulation is shape-regular and quasi-uniform near the boundary $\Gamma$, then the partition $\Gamma_h$ is quasi-uniform and the diameter of the elements of $\Gamma_h$ is equivalent to
\begin{equation}\label{eq:3.10b}
h_{\mathrm{strip}}:= \max\{ h_e \,:\, e \in \mathcal E_h^{\mathrm{strip}}\cup \Gamma_h\}.
\end{equation}
Therefore, we have an inverse inequality for elements of the discrete space $\Lambda_h$:
\begin{equation}\label{eq:3.11}
h_{\mathrm{strip}}^\varepsilon \|\lambda_h\|_{-1/2+\varepsilon,\Gamma}\le C^{\mathrm{inv}}_{\varepsilon}\|\lambda_h\|_{-1/2,\Gamma} \qquad \forall \lambda_h \in \Lambda_h, \quad \varepsilon \in [0,1/2].
\end{equation}
An additional ingredient for the proofs below is related to continuity of layer potentials, namely, we can choose $C_\varepsilon^{\circled5}$ for every $\varepsilon \in [0,1/2]$ such that
\begin{equation}\label{eq:3.12}
\| \nabla(\mathcal S\lambda)\|_{\varepsilon,\Omega_{\mathrm{int}}}\le C_\varepsilon^{\circled5}\|\lambda\|_{-1/2,\Gamma}\qquad\forall \lambda\in H^{-1/2}(\Gamma)
\end{equation}
and
\begin{equation}\label{eq:3.13}
\|\nabla(\mathcal S\lambda)\|_{\varepsilon,\Omega_-}\le C_\varepsilon^{\circled5}\|\lambda\|_{-1/2+\varepsilon,\Gamma}\qquad \forall \lambda\in H^{-1/2+\varepsilon}(\Gamma).
\end{equation}
Note that the interior regularity bound \eqref{eq:3.12} can be proved directly by bounding the fundamental solution, since the distance between $\Omega_{\mathrm{int}}$ and $\Gamma$ is positive. On the other hand, \eqref{eq:3.13} is a well-known regularity result of layer potentials (see \cite{McL00}).

\begin{lemma}\label{lemma:3.3}
For all $\varepsilon \in (0,1/2)$, there exists $C_\varepsilon^{\circled6}>0$ such that
\[
\sum_{e\in \mathcal E_h^\circ} h_e^{d-2} \| \partial_\nu (u^*\circ F_e)\|_{-1/2+\varepsilon,F_e^{-1}(e)}^2 \le C_\varepsilon^{\circled6}\|\nabla u^*\|_{\mathbb R^d}^2 \qquad \forall u^*=\mathcal S \lambda_h, \quad \lambda_h \in \Lambda_h^{(0)}.
\]
\end{lemma}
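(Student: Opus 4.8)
The plan is to exploit that $u^*=\mathcal S\lambda_h$ is harmonic in $\Omega_-$ (it solves Laplace's equation on all of $\mathbb R^d\setminus\Gamma\supset\Omega_-$), so that $\mathbf v:=\nabla u^*$ is a \emph{solenoidal} vector field there, and to transport the face estimate to a reference configuration where Lemma \ref{lemma:3.2} applies. First I would pull back to the reference element. Writing $w:=u^*\circ F_e$ and using $F_e(\mathbf x)=h_e\Theta_e\mathbf x+\mathbf b_e$ with $\Theta_e^\top\Theta_e=\mathrm I_d$, the chain rule gives $\nabla w=h_e\Theta_e^\top\,(\nabla u^*)\circ F_e=:\widehat{\mathbf v}$. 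Because $\Theta_e$ is orthogonal and rigid scalings preserve harmonicity, $\Delta w=h_e^2(\Delta u^*)\circ F_e=0$, so $\widehat{\mathbf v}$ is divergence-free on $\widehat K=F_e^{-1}(K_e)$, and on the reference face $F_e^{-1}(e)$ (contained in the base of $\widehat K$) one has $\partial_\nu(u^*\circ F_e)=\widehat{\mathbf v}\cdot\mathbf n$. Lemma \ref{lemma:3.2} then yields $\|\partial_\nu(u^*\circ F_e)\|_{-1/2+\varepsilon,F_e^{-1}(e)}^2\le (C_\varepsilon^{\circled4})^2\,\|\widehat{\mathbf v}\|_{\varepsilon,\widehat K}^2$.

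Next I would carry out the change of variables in the Sobolev--Slobodetskij norm \eqref{eq:3.2}. Using $\mathrm d\mathbf y=h_e^d\,\mathrm d\mathbf x$ and $|F_e(\mathbf x)-F_e(\mathbf y)|=h_e|\mathbf x-\mathbf y|$, a direct computation gives $\|\widehat{\mathbf v}\|_{\widehat K}^2=h_e^{2-d}\|\nabla u^*\|_{K_e}^2$ and $|\widehat{\mathbf v}|_{\varepsilon,\widehat K}^2=h_e^{2-d+2\varepsilon}\,|\nabla u^*|_{\varepsilon,K_e}^2$, whence
\[
h_e^{d-2}\,\|\widehat{\mathbf v}\|_{\varepsilon,\widehat K}^2=\|\nabla u^*\|_{K_e}^2+h_e^{2\varepsilon}\,|\nabla u^*|_{\varepsilon,K_e}^2 .
\]
This is exactly where the non-scalability of the fractional seminorm surfaces: the factor $h_e^{2\varepsilon}$ cannot be absorbed and must be tracked throughout. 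Summing over $e$, the $L^2$ terms are controlled at once by the finite-overlap bound \eqref{eq:3.1}, namely $\sum_e\|\nabla u^*\|_{K_e}^2\le C_\circ\|\nabla u^*\|_{\Omega_-}^2\le C_\circ\|\nabla u^*\|_{\mathbb R^d}^2$.

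The crux is the seminorm contribution $\sum_e h_e^{2\varepsilon}|\nabla u^*|_{\varepsilon,K_e}^2$, which I would split along $\mathcal E_h^\circ=\mathcal E_h^{\mathrm{strip}}\cup\mathcal E_h^{\mathrm{int}}$. The pointwise statement $\sum_e\mathbf 1_{K_e}\le C_\circ$ underlying \eqref{eq:3.1} localizes the seminorms, since $\sum_e\mathbf 1_{K_e}(\mathbf x)\mathbf 1_{K_e}(\mathbf y)\le\sum_e\mathbf 1_{K_e}(\mathbf x)\le C_\circ$; thus $\sum_{e\in\mathcal E_h^{\mathrm{int}}}|\nabla u^*|_{\varepsilon,K_e}^2\le C_\circ\|\nabla u^*\|_{\varepsilon,\Omega_{\mathrm{int}}}^2$ (those $K_e$ lie in $\Omega_{\mathrm{int}}$ by \eqref{eq:3.10}), and likewise $\sum_{e\in\mathcal E_h^{\mathrm{strip}}}|\nabla u^*|_{\varepsilon,K_e}^2\le C_\circ\|\nabla u^*\|_{\varepsilon,\Omega_-}^2$. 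For the interior edges the mesh sizes are bounded by $\mathrm{diam}\,\Omega_-$, and interior regularity \eqref{eq:3.12} bounds the result directly by $\|\lambda_h\|_{-1/2,\Gamma}^2$. For the strip edges, where \eqref{eq:3.12} is unavailable, I would instead invoke the global layer-potential regularity \eqref{eq:3.13}, giving $\|\nabla u^*\|_{\varepsilon,\Omega_-}^2\le (C_\varepsilon^{\circled5})^2\|\lambda_h\|_{-1/2+\varepsilon,\Gamma}^2$; the scaling weight $h_e^{2\varepsilon}\le h_{\mathrm{strip}}^{2\varepsilon}$ is then precisely what the inverse inequality \eqref{eq:3.11} consumes to return to $\|\lambda_h\|_{-1/2,\Gamma}^2$. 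In both cases \eqref{eq:3.14} converts $\|\lambda_h\|_{-1/2,\Gamma}^2$ back into $\|\nabla u^*\|_{\mathbb R^d}^2$ — and this is the step requiring $\lambda_h\in\Lambda_h^{(0)}$ (which also guarantees the correct decay of $\mathcal S\lambda_h$, so that $\|\nabla u^*\|_{\mathbb R^d}$ is finite in the two-dimensional case).

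I expect the strip estimate to be the main obstacle. It is the only place where one cannot avoid coupling the stronger-norm regularity bound \eqref{eq:3.13} with the inverse inequality, and making the powers of $h_{\mathrm{strip}}$ match exactly hinges on the rigid (orthogonal-plus-uniform-scaling) nature of $F_e$ in the change-of-variables computation and on the quasi-uniformity of the triangulation near $\Gamma$. Assembling the interior and strip contributions with the $L^2$ term then produces the asserted inequality with $C_\varepsilon^{\circled6}$ depending on $C_\circ$, $C_\varepsilon^{\circled4}$, $C_\varepsilon^{\circled5}$, $C_\varepsilon^{\mathrm{inv}}$, $C_\Gamma$ and $\mathrm{diam}\,\Omega_-$.
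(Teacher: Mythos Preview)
Your proposal is correct and follows essentially the same route as the paper: harmonicity of $u^*\circ F_e$ on $\widehat K$, Lemma~\ref{lemma:3.2}, the rigid change of variables yielding \eqref{eq:3.16}, the strip/interior split of $\mathcal E_h^\circ$ combined with the finite-overlap bound, and then \eqref{eq:3.12}--\eqref{eq:3.13} together with the inverse inequality \eqref{eq:3.11} and the coercivity \eqref{eq:3.14}. Your justification of the finite-overlap bound for the Slobodetskij seminorm via $\sum_e\mathbf 1_{K_e}(\mathbf x)\mathbf 1_{K_e}(\mathbf y)\le\sum_e\mathbf 1_{K_e}(\mathbf x)\le C_\circ$ is in fact more explicit than what the paper records.
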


\begin{proof} Note that $\Delta u^*=0$ in $K_e$ for all $e$, since $K_e\subset \Omega_-$. Since $F_e$ is a dilation and a rigid transformation, it is clear that $\Delta (u^*\circ F_e)=0$ in $\widehat K$. Therefore, by Lemma \ref{lemma:3.2}, we can bound
\begin{equation}\label{eq:3.15}
 \| \partial_\nu (u^*\circ F_e)\|_{-1/2+\varepsilon,F_e^{-1}(e)}\le C_\varepsilon^{\circled4} \|\nabla (u^*\circ F_e)\|_{\varepsilon,\widehat K}\qquad\forall e \in \mathcal E_h^\circ.
\end{equation}
A simple change of variables proves that
\begin{eqnarray}\nonumber
\|\nabla (u^*\circ F_e)\|_{\varepsilon,\widehat K}^2 &=& \|\nabla (u^*\circ F_e)\|_{\widehat K}^2+|\nabla (u^*\circ F_e)|_{\varepsilon,\widehat K}^2 \\
&=& h_e^{2-d} \Big(\| \nabla u^*\|_{K_e}^2 + h_e^{2\varepsilon}|\nabla u^*|_{\varepsilon,K_e}^2\Big).
\label{eq:3.16}
\end{eqnarray}
Before adding over all edges, notice that the estimate \eqref{eq:3.1} and the definitions of the subsets of edges \eqref{eq:3.9} and property \eqref{eq:3.10} imply that
\[
\sum_{e\in \mathcal E_h^{\mathrm{int}}} |v|_{\varepsilon,K_e}^2 \le C_\circ | v|_{\varepsilon,\Omega_{\mathrm{int}}}^2
\quad\mbox{and}\quad \sum_{e\in \mathcal E_h^{\mathrm{strip}}} |v|_{\varepsilon,K_e}^2 \le C_\circ | v|_{\varepsilon,\Omega_-}^2.
\]
Hence, \eqref{eq:3.16}, the estimates \eqref{eq:3.12}-\eqref{eq:3.13} and the inverse inequality \eqref{eq:3.11} imply that
\begin{eqnarray}\nonumber
\sum_{e\in \mathcal E_h^\circ} h_e^{d-2}\|\nabla (u^*\circ F_e)\|_{\varepsilon,\widehat K}^2 &\le & C_\circ\Big( \|\nabla u^*\|_{\Omega_-}^2 + h^{2\varepsilon} |\nabla u^*|_{\varepsilon,\Omega_{\mathrm{int}}}^2+ h_{\mathrm{strip}}^{2\varepsilon}  |\nabla u^*|_{\varepsilon,\Omega_-}^2\Big)\\
&\le & C_\circ \Big( \|\nabla u^*\|_{\Omega_-}^2 + (C_\varepsilon^{\circled5})^2 ( h^{2\varepsilon}+C_{\mathrm{inv}}^{2-2\varepsilon}) \|\lambda_h\|_{-1/2,\Gamma}^2\Big).\qquad
\label{eq:3.17}
\end{eqnarray}
The result is then a straightforward consequence of \eqref{eq:3.15}, \eqref{eq:3.17} and \eqref{eq:3.14}.
\end{proof}

\begin{lemma}\label{lemma:3.4}
For all $\varepsilon \in (0,1/2)$, there exists $C_\varepsilon^{\circled7}>0$ such that
\[
\sum_{e\in \mathcal E_h^\circ} h_e^{d-2}\| \jump{u_h\circ F_e}\|_{1/2-\varepsilon,F_e^{-1}(e),\sim}^2\le C_\varepsilon^{\circled7}
|u_h|_h^2 \qquad \forall u_h \in V_h.
\]
\end{lemma}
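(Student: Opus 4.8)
The plan is to handle one edge at a time, pull the jump back to the reference configuration, where it becomes a polynomial of degree at most $k$, and there trade the fractional $\widetilde H^{1/2-\varepsilon}$-norm for an $L^2$-norm by a polynomial inverse estimate before scaling back.

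Fix $e\in\mathcal E_h^\circ$ and write $s:=1/2-\varepsilon$ and $D:=F_e^{-1}(e)$, which is $\widehat e$ when $d=2$ and a triangle contained in the base of $F_e^{-1}(K_e)\in\widehat{\mathcal K}$ when $d=3$. Since $u_h|_{K_\pm}\in\mathbb P_k$, the jump $\jump{u_h}$ is a polynomial of degree at most $k$ on $e$, and hence $\psi:=\jump{u_h\circ F_e}=\jump{u_h}\circ F_e$ is a polynomial of degree at most $k$ on $D$. Because $F_e$ restricted to the hyperplane of $D$ is a rigid motion composed with the dilation by the factor $h_e$, it multiplies $(d-1)$-dimensional surface measure by $h_e^{d-1}$ and distances by $h_e$; in particular $\|\psi\|_D^2=h_e^{-(d-1)}\|\jump{u_h}\|_e^2$.

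The heart of the argument is a polynomial inverse estimate on $D$: for $s\in(0,1/2)$ there is a constant $C$, depending only on $k$ and $\varepsilon$, with $\|p\|_{s,D,\sim}\le C\,\|p\|_D$ for every $p\in\mathbb P_k(D)$. On a fixed $D$ this is just the equivalence of all norms on the finite-dimensional space $\mathbb P_k(D)$; the restriction $\varepsilon>0$, i.e.\ $s<1/2$, is precisely what makes the distance-weighted term $\int_D|p|^2\,\mathrm{dist}(\cdot,\partial D)^{-2s}$ finite for polynomials that do not vanish on $\partial D$. Granting this estimate and using the $L^2$-scaling above, I would bound
\[
h_e^{d-2}\|\psi\|_{s,D,\sim}^2\le C^2\,h_e^{d-2}\|\psi\|_D^2=C^2\,h_e^{d-2}h_e^{-(d-1)}\|\jump{u_h}\|_e^2=C^2\,h_e^{-1}\|\jump{u_h}\|_e^2,
\]
so that summing over $e\in\mathcal E_h^\circ$ gives the claim with $C_\varepsilon^{\circled7}=C^2$. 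Note that the powers of $h_e$ cancel exactly and that no overlap estimate such as \eqref{eq:3.1} is needed, since each $e$ contributes only once.

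The step I expect to be the main obstacle is making the constant $C$ in the polynomial inverse estimate uniform over all admissible $D$. In two dimensions this is vacuous, since $D=\widehat e$ is fixed. In three dimensions, however, $D=F_e^{-1}(e)$ ranges over a whole family of sub-triangles of the finitely many reference bases, and one must verify that $C$ does not degenerate along this family. The family is, however, non-degenerate: since $F_e$ dilates by $h_e=\mathrm{diam}(e)$, each $D$ has diameter comparable to one, and shape-regularity of $\{\mathcal T_h\}$ bounds its aspect ratio away from collapse. I would therefore map every $D$ onto a single master triangle by an affine map whose singular values are bounded above and below in terms of the shape-regularity constant, and track how the two contributions to $\|\cdot\|_{s,D,\sim}$ transform: the Gagliardo seminorm transforms by a bi-Lipschitz equivalence, while for the weighted term one uses that $\mathrm{dist}(\cdot,\partial D)$ is comparable, up to the distortion of the affine map, to the distance to $\partial$ of the master triangle, the integrability again being guaranteed by $s<1/2$. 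Equivalently, a compactness argument over the non-degenerate family of triangles $D$, together with continuity of $p\mapsto\|p\|_{s,D,\sim}$ in both $p$ and $D$, rules out any blow-up of $C$.
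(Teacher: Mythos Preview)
Your proposal is correct and follows essentially the same route as the paper: pull the jump back to the reference domain $D=F_e^{-1}(e)$ where it is a polynomial of degree $\le k$, invoke a finite-dimensional inverse estimate $\|p\|_{1/2-\varepsilon,D,\sim}\le C\|p\|_D$, and then use the $L^2$ scaling $\|\psi\|_D^2=h_e^{-(d-1)}\|\jump{u_h}\|_e^2$ to recover exactly $h_e^{-1}\|\jump{u_h}\|_e^2$. Your treatment of the uniformity of $C$ in three dimensions---mapping each $D$ to a fixed master triangle via an affine map whose distortion is controlled by shape-regularity---is precisely what the paper does with the maps $G_e:\widehat D\to\widehat D_e$ in \eqref{eq:3.18}--\eqref{eq:3.19}; your added remark that $s<1/2$ is what keeps the distance-weighted integral finite is a useful clarification that the paper leaves implicit.
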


\begin{proof}
Some preparations are first needed in the three-dimensional case.
Let $\widehat D$ be a fixed triangle in $\mathbb R^2\times \{0\}$ and let $\widehat D_e:=F^{-1}_e(e)$. The affine maps $G_e:\mathbb R^2\to \mathbb R^2$ that transform $\widehat D$ to $\widehat D_e$ are uniformly bounded and have uniformly bounded inverses, because the triangles $\widehat D_e$ are shape regular and their area is of order one. Therefore, we can bound
\begin{equation}\label{eq:3.18}
 \| \varphi\|_{s,\widehat D_e,\sim}\le B_s \| \varphi\circ G_e\|_{s,\widehat D,\sim} \qquad \forall \varphi \in H^s(\widehat D_e),\qquad 0 < s < 1.
\end{equation}
Using  \eqref{eq:3.18} with $s=1/2-\varepsilon$, a finite dimension argument on $\widehat D$, and the change of variables $G_e$,  we can prove that there exists $C_\varepsilon^{\circled7}$ such that
\begin{equation}\label{eq:3.19}
\| \hat u\|_{1/2+\varepsilon,\widehat D_e,\sim}^2\le C_\varepsilon^{\circled7} \| \hat u\|_{\widehat D_e}^2 \qquad \forall \hat u \in \mathbb P_k.
\end{equation}
Note that this result is straightforward in the two-dimensional case since then for all $e$, $\widehat D_e=\widehat e$. Finally
\[
 h_e^{d-2}\| \jump{u_h\circ F_e}\|_{1/2-\varepsilon,F_e^{-1}(e),\sim}^2 \le  h_e^{d-2}C_\varepsilon^{\circled7} \| \jump{u_h\circ F_e}\|_{\widehat D_e}^2=C_\varepsilon^{\circled7}h_e^{-1} \|\jump{u_h}\|_e^2.
\]
\end{proof}

\subsection{Proof of Theorem \ref{th:2.1}}\label{sec:3.4}

A standard argument in DG analysis (see \cite{ArnBreCocMar01} for example) can be used to prove that
\begin{equation}\label{eq:3.20}
a_{\mathrm{DG}}(u_h,u_h) \ge  \| \nabla_h u_h\|_{\Omega_-}^2 -|1+\xi| \Big( C_\star \delta \| \nabla_h u_h\|_{\Omega_-}^2+ (4\delta)^{-1} |u_h|_h^2\Big) + \sigma_{\min} |u_h|_h^2,
\end{equation}
where $C_\star$ is a positive constant that allows us to bound
\begin{equation}\label{eq:3.20b}
\sum_{K\in \mathcal T_h} \sum_{\mathcal E_h \in e\subset \partial K} 
h_e \| v_h\|_e^2 \le C_\star \| v_h\|_{\Omega_-}^2 \qquad \forall v_h \in V_h.
\end{equation}
Taking $\delta=1/(4C_\star |1+\xi|)$ in \eqref{eq:3.20} for $\xi\neq -1$, we can present all cases together with the estimate
\begin{equation}\label{eq:3.21}
a_{\mathrm{DG}}(u_h,u_h)  \ge \smallfrac34 \| \nabla_hu_h\|_{\Omega_-}^2 +(\sigma_{\min}-|1+\xi|^2 C_\star)|u_h|_h^2 \qquad \forall u_h \in V_h.
\end{equation}
We now turn our attention to the full bilinear form. Let $\lambda_h \in \Lambda_h$ (satisfying the additional constraint $\int_\Gamma \lambda_h=0$ in the two-dimensional case). Let then $u^*:=\mathcal S \lambda_h$. By well-known results on boundary integral operators it follows that
\begin{equation}
\smallfrac12\lambda_h-\mathcal K^t\lambda_h =-\partial_\nu^+ u^*, \qquad \lambda_h=\partial_\nu^-u^*-\partial_\nu^+u^*,\qquad \langle\mathcal V\lambda_h,\lambda_h\rangle_\Gamma=\| \nabla u^*\|_{\mathbb R^d}
\end{equation}
(recall \eqref{eq:3.14}). Therefore
\begin{eqnarray*}
B((u_h,\lambda_h),(u_h,\lambda_h)) &=& a_{\mathrm{DG}}(u_h,u_h) -\langle\lambda_h,u_h\rangle_\Gamma + \langle u_h,\smallfrac12\lambda_h-\mathcal K^t\lambda_h\rangle_\Gamma + \langle\mathcal V\lambda_h,\lambda_h\rangle_\Gamma\\
&=&
a_{\mathrm{DG}}(u_h,u_h)-\langle \partial_\nu^- u^*-\partial_\nu^+ u^*,u_h\rangle_\Gamma -\langle \partial_\nu^+u^*,u_h\rangle_\Gamma + \| \nabla u^*\|_{\mathbb R^d}^2\\
&=& a_{\mathrm{DG}}(u_h,u_h) -(\nabla u^*,\nabla_h u_h)_{\Omega_-}+ \| \nabla u^*\|_{\mathbb R^d}^2+
\sum_{e\in\mathcal E_h^\circ} \langle \ave{\partial_\nu u^*},\jump{u_h}\rangle_e,
\end{eqnarray*}
where in the last step we have applied Green's formula element-wise, taking advantage of the fact that $\Delta u^*=0$. 
Using \eqref{eq:3.21} and the fact that $\partial_\nu u^*$ is single-valued across inter-element faces, it follows that
\begin{eqnarray}\nonumber
B((u_h,\lambda_h),(u_h,\lambda_h)) &\ge & \smallfrac14 \| \nabla_h u_h\|_{\Omega_-}^2 +\smallfrac12 \| \nabla u^*\|_{\mathbb R^d}^2\\
\label{eq:3.23}
& & + (\sigma_{\min}-|1+\xi|^2 C_\star)|u_h|_h^2 + \sum_{e\in \mathcal E_h^\circ} \langle \partial_\nu u^*,\jump{u_h}\rangle_e.
\end{eqnarray}
Finally, since $F_e$ are rigid motions composed with dilations, we can easily apply Lemmas \ref{lemma:3.3} and \ref{lemma:3.4} to bound
\begin{eqnarray*}
\sum_{e\in \mathcal E_h^\circ} \langle \partial_\nu u^*,\jump{u_h}\rangle_e &=&  \sum_{e\in \mathcal E_h^\circ}h_e^{d-2}\langle \partial_\nu(u^*\circ F_e),\jump{u_h\circ F_e}\rangle_{F^{-1}_e(e)}\\
&\le &  \sum_{e\in \mathcal E_h^\circ}h_e^{d-2}\|\partial_\nu(u^*\circ F_e)\|_{-1/2+\varepsilon,F_e^{-1}(e)}\|\jump{u_h\circ F_e}\|_{1/2-\varepsilon,F^{-1}_e(e),\sim}\\
&\le & C_\varepsilon^{\circled6} \delta \| \nabla u^*\|_{\mathbb R^d}^2+ 
\frac{C_\varepsilon^{\circled7}}{4\delta}
|u_h|_h^2.
\end{eqnarray*}
Taking now $\delta=C_\varepsilon^{\circled6}/4$  and inserting this bound in \eqref{eq:3.23}, it follows that
\[
B((u_h,\lambda_h),(u_h,\lambda_h)) \ge \smallfrac14 \| \nabla_h u_h\|_{\Omega_-}^2 +\smallfrac14 \| \nabla u^*\|_{\mathbb R^d}^2+ (\sigma_{\min}-|1+\xi|^2 C_\star-C_\varepsilon^{\circled7}/C_\varepsilon^{\circled6})|u_h|_h^2.
\]
The result now follows by simply taking $\sigma_{\min}\ge \smallfrac14 +|1+\xi|^2 C_\star+C_\varepsilon^{\circled7}/C_\varepsilon^{\circled6}$ and using \eqref{eq:3.14}.

\section{Convergence analysis}\label{sec:4}

\subsection{Quasi-optimality}

We consider the following discrete seminorm, defined in $H^1(\mathcal T_h)\times H^{-1/2}(\Gamma)$
\[
\triple{(u,\lambda)}_h^2:=\|\nabla_h u\|_{\Omega_-}^2+|u|_h^2+\|\lambda\|_{-1/2,\Gamma}^2,
\]
as well as the stronger norm
\[
\triple{(u,\lambda)}_{h,\mathrm{st}}^2:=\triple{(u,\lambda)}_h^2+\sum_{e\in \Gamma_h} h_e^{-1} \|u\|_e^2+\sum_{e\in \Gamma_h} h_e^{-1} \|\lambda\|_e^2+\sum_{K\in\mathcal T_h} h_K^2 |u|_{2,K}^2,
\]
defined in $H^2(\mathcal T_h)\times L^2(\Gamma)$. 

\begin{proposition}[Continuity of the bilinear form]\label{prop:4.1} There exists $C_1>0$ such that
\[
|B((u,\lambda),(v_h,\mu_h))|\le C_1 \triple{(u,\lambda)}_{h,\mathrm{st}}\triple{(v_h,\mu_h)}_h \qquad \begin{array}{l}\forall (u,\lambda)\in H^2(\mathcal T_h)\times L^2(\Gamma),\\ \forall (v_h,\mu_h)\in V_h^\star\times \Lambda_h.\end{array}
\]
\end{proposition}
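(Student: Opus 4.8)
The plan is to estimate $B((u,\lambda),(v_h,\mu_h))$ one summand at a time, always measuring the first argument in $\triple{\cdot}_{h,\mathrm{st}}$ and the second in $\triple{\cdot}_h$, and then assembling by a Cauchy--Schwarz inequality over the summand indices. I would split the work into the four contributions of $a_{\mathrm{DG}}(u,v_h)$ and the three boundary couplings $-\langle\lambda,v_h\rangle_\Gamma$, $\langle\smallfrac12 u-\mathcal K u,\mu_h\rangle_\Gamma$ and $\langle\mathcal V\lambda,\mu_h\rangle_\Gamma$. The four terms of the DG form are routine: the volume term obeys $|(\nabla_h u,\nabla_h v_h)_{\Omega_-}|\le\|\nabla_h u\|_{\Omega_-}\|\nabla_h v_h\|_{\Omega_-}$, and the penalty term is bounded edgewise by $C|u|_h\,|v_h|_h$ (with $C$ depending on $\max_e\sigma_e$). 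For the consistency term $\sum_e\langle\ave{\partial_\nu u},\jump{v_h}\rangle_e$ I would write each factor as $h_e^{1/2}\|\ave{\partial_\nu u}\|_e\cdot h_e^{-1/2}\|\jump{v_h}\|_e$; the scaled trace inequality for $H^2$ functions, $h_e\|\partial_\nu u\|_e^2\le C(|u|_{1,K}^2+h_K^2|u|_{2,K}^2)$, then controls the first factor by $\|\nabla_h u\|_{\Omega_-}^2+\sum_K h_K^2|u|_{2,K}^2\le\triple{(u,\lambda)}_{h,\mathrm{st}}^2$, while the second sums to $|v_h|_h^2$. The symmetrizing term $\sum_e\langle\ave{\partial_\nu v_h},\jump{u}\rangle_e$ is treated the same way with the roles exchanged, now using the discrete (inverse) trace inequality $h_e\|\partial_\nu v_h\|_e^2\le C\|\nabla v_h\|_K^2$ for polynomials to bound the $v_h$-factor by $\|\nabla_h v_h\|_{\Omega_-}$ and reading the $\jump{u}$-factor off $|u|_h$.

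Among the boundary couplings, $\langle\mathcal V\lambda,\mu_h\rangle_\Gamma$ is the easiest: the mapping property $\mathcal V:H^{-1/2}(\Gamma)\to H^{1/2}(\Gamma)$ gives directly $|\langle\mathcal V\lambda,\mu_h\rangle_\Gamma|\le C\|\lambda\|_{-1/2,\Gamma}\|\mu_h\|_{-1/2,\Gamma}$, and both factors already appear in the respective norms. For $\langle\smallfrac12 u-\mathcal K u,\mu_h\rangle_\Gamma$ I would not attempt to place $u|_\Gamma$ in $H^{1/2}(\Gamma)$ (its elementwise traces need not match across $\Gamma_h$); instead I would pair in $L^2(\Gamma)$ and use that $\mathcal K:L^2(\Gamma)\to L^2(\Gamma)$ is bounded, so that $\|\smallfrac12 u-\mathcal K u\|_\Gamma\le C\|u\|_\Gamma$. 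Writing $\|u\|_\Gamma^2=\sum_{e\in\Gamma_h}\|u\|_e^2\le h_{\mathrm{strip}}\sum_{e\in\Gamma_h}h_e^{-1}\|u\|_e^2$ and invoking the inverse inequality \eqref{eq:3.11} with $\varepsilon=1/2$, namely $\|\mu_h\|_\Gamma\le C h_{\mathrm{strip}}^{-1/2}\|\mu_h\|_{-1/2,\Gamma}$, the two powers of $h_{\mathrm{strip}}$ cancel and leave a bound by $C(\sum_{e\in\Gamma_h}h_e^{-1}\|u\|_e^2)^{1/2}\|\mu_h\|_{-1/2,\Gamma}$, both factors being controlled by the correct norms. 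This is precisely where the local quasi-uniformity near $\Gamma$ enters.

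The delicate term is $\langle\lambda,v_h\rangle_\Gamma$, and I expect it to be the main obstacle, because the test norm $\triple{(v_h,\mu_h)}_h$ contains \emph{no} trace of $v_h$ on $\Gamma$. I would start from the edgewise splitting $|\langle\lambda,v_h\rangle_\Gamma|\le(\sum_{e\in\Gamma_h}h_e^{-1}\|\lambda\|_e^2)^{1/2}(\sum_{e\in\Gamma_h}h_e\|v_h\|_e^2)^{1/2}$, the first factor being part of $\triple{(u,\lambda)}_{h,\mathrm{st}}$. The discrete trace inequality \eqref{eq:3.20b} bounds the second factor by $C\|v_h\|_{\Omega_-}$, and it is exactly here that the restriction $v_h\in V_h^\star$ is needed: since $v_h$ has zero mean, the discrete Poincar\'e--Friedrichs inequality for broken polynomial spaces gives $\|v_h\|_{\Omega_-}\le C(\|\nabla_h v_h\|_{\Omega_-}+|v_h|_h)\le C\triple{(v_h,\mu_h)}_h$. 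Collecting the seven bounds yields the claim with $C_1$ depending only on the mesh constants, the degree $k$, and $\max_e\sigma_e$. The whole difficulty of the statement is concentrated in this last term: without the mean-value normalization of $v_h$ there is no control of $v_h|_\Gamma$ by $\triple{\cdot}_h$, which is why $V_h^\star$ appears in the hypotheses.
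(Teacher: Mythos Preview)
Your proposal is correct and follows essentially the same route as the paper's proof: term-by-term estimation of $a_{\mathrm{DG}}$ via the scaled trace inequality \eqref{eq:4.0} and the discrete trace bound \eqref{eq:3.20b}, the $L^2(\Gamma)$ pairing plus the inverse inequality \eqref{eq:3.11} (with $\varepsilon=1/2$) for the $\mathcal K$-term, the mapping property of $\mathcal V$ for the last term, and the discrete Poincar\'e--Friedrichs inequality on $V_h^\star$ to control $\langle\lambda,v_h\rangle_\Gamma$. Your identification of this last term as the reason for the restriction to $V_h^\star$ matches the paper exactly.
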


\begin{proof} Using a local trace inequality
\begin{equation}\label{eq:4.0}
\sum_{\mathcal E_h \ni e\subset \partial K} h_e \| v\|_e^2 \le h_K \| v\|_{\partial K}^2\le  C_{\star\star} \left( \| v\|_K^2 + h_K^2\|\nabla v\|_K^2\right)\qquad \forall v \in H^1(K),
\end{equation}
and applying \eqref{eq:3.20b} to the components of $\nabla_h v_h$,  it is simple to obtain the bound
\begin{eqnarray}\nonumber
|a_{\mathrm{DG}}(u,v_h)| & \le & \|\nabla_h u\|_{\Omega_-}\|\nabla_h v_h\|_{\Omega_-}+ C_{\star\star}^{1/2}
 \left( \| \nabla_h u\|_{\Omega_-}^2 + \sum_{K\in \mathcal T_h}h_K^2| u|_{2,K}^2\right)^{1/2}|v_h|_h\\
& & + C_\star^{1/2} | u|_h\|\nabla_h v_h\|_{\Omega_-} + \sigma_{\max} |u|_h|v_h|_h.\label{eq:4.1}
\end{eqnarray}
Also, using a discrete Poincar\'e-Friedrichs inequality (see \cite{Bre03} and \cite{Arn82}), we can bound
\begin{equation}\label{eq:4.2}
\| v_h\|_{\Omega_-} \le C_{\mathrm{PF}} \Big( \|\nabla_h v_h\|_{\Omega_-}^2+|v_h|_h^2\Big)^{1/2}\qquad \forall v_h \in V_h^\star,
\end{equation}
which, together with \eqref{eq:3.20b} allows us to bound
\begin{eqnarray}\nonumber
|\langle\lambda,v_h\rangle_\Gamma| & \le &\left( \sum_{e\in \Gamma_h} h_e^{-1} \|\lambda\|_e^2\right)^{1/2}\left( \sum_{e\in \Gamma_h} h_e \| v_h\|_e^2\right)^{1/2}\\
&\le& C_\star^{1/2}C_{\mathrm{PF}} \left( \sum_{e\in \Gamma_h} h_e^{-1} \|\lambda\|_e^2\right)^{1/2} \Big( \|\nabla_h v_h\|_{\Omega_-}^2+|v_h|_h^2\Big)^{1/2}.\label{eq:4.3}
\end{eqnarray}
The third term in the bilinear form is bounded using the inverse inequality \eqref{eq:3.11} (recall \eqref{eq:3.10b} for the definition of the local meshsize), proving that
\begin{eqnarray}\nonumber
|\langle\smallfrac12 u-\mathcal K u,\mu_h\rangle_\Gamma| &\le& \|\smallfrac12\mathcal I-\mathcal K\|_{L^2(\Gamma)\to L^2(\Gamma)}\|u\|_{\Gamma} \|\mu_h\|_{\Gamma}\\
&\le & C_{1/2}^{\mathrm{inv}}  \|\smallfrac12\mathcal I-\mathcal K\|_{L^2(\Gamma)\to L^2(\Gamma)} \left( \sum_{e\in \Gamma_h} h_e^{-1} \| u\|_e^2\right)^{1/2}\!\! \|\mu_h\|_{-1/2,\Gamma}.\label{eq:4.4}
\end{eqnarray}
The result is now a direct consequence of \eqref{eq:4.1}, \eqref{eq:4.3}, \eqref{eq:4.4}, the continuity of $\mathcal V:H^{-1/2}(\Gamma)\to H^{1/2}(\Gamma)$ and the definition of the broken norms.
\end{proof} 

Let $\Pi_h:L^2(\Omega_-)\to V_h$ and $P_h:L^2(\Gamma)\to \Lambda_h$  be the orthogonal projections onto $V_h$ and $\Lambda_h$ respectively. Since constant elements are in $V_h$, we can decompose (using \eqref{eq:2.7} and \eqref{eq:2.11})
\begin{equation}\label{eq:4.5}
u_h-\Pi_h u = u_{h,\star}-\Pi_h u_\star + c_h -c.
\end{equation}
Also, using the fact that constant functions are in $\Lambda_h$, we can show (see the first equations in \eqref{eq:2.6} and \eqref{eq:2.9}) that
\begin{equation}\label{eq:4.6}
\int_\Gamma P_h\lambda=\int_\Gamma \lambda = \int_{\Omega_-} f +\int_\Gamma\beta_1=\int_\Gamma \lambda_h.
\end{equation}
(All of them vanish in the two-dimensional case because of \eqref{eq:2.2}.) 

\begin{proposition}[Quasi-optimality]\label{prop:4.2}
Let $(u,\lambda)$ and $(u_h,\lambda_h)$ be the respective solutions of \eqref{eq:2.6} and \eqref{eq:2.9} and assume that $(u,\lambda)\in H^2(\mathcal T_h)\times L^2(\Gamma)$. Then there exists $C_2>0$ such that
\[
\triple{(u_h- u,\lambda_h-\lambda)}_h \le C_2 \triple{(u-\Pi_h u,\lambda-P_h\lambda)}_{h,\mathrm{st}}.
\]
\end{proposition}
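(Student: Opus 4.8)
The plan is to run a Strang-type argument built on the three pillars already in place: discrete coercivity (Theorem~\ref{th:2.1}), Galerkin consistency \eqref{eq:2.13}, and the continuity estimate of Proposition~\ref{prop:4.1}. I first introduce the discrete error quantities $w_h:=u_{h,\star}-\Pi_h u_\star$ and $\rho_h:=\lambda_h-P_h\lambda$. Since $\Pi_h$ reproduces constants, $\Pi_h u_\star$ has vanishing mean (test the projection with $v_h\equiv1$), so $w_h\in V_h^\star$; and \eqref{eq:4.6} guarantees $\int_\Gamma\rho_h=0$, i.e. $\rho_h\in\Lambda_h^{(0)}$. This is exactly the pair to which Theorem~\ref{th:2.1} applies, and combining it with the coercivity of $\mathcal V$ in \eqref{eq:3.14} yields a lower bound $B((w_h,\rho_h),(w_h,\rho_h))\ge c\,\triple{(w_h,\rho_h)}_h^2$ with $c>0$ depending on $C_\Gamma$.

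Next I would rewrite this diagonal term. Writing $w_h=u_h-(c_h+\Pi_h u_\star)$ and using consistency \eqref{eq:2.13} against the discrete test pair $(w_h,\rho_h)$ gives
\[
B((w_h,\rho_h),(w_h,\rho_h))=B\big((u-\Pi_h u_\star-c_h,\ \lambda-P_h\lambda),(w_h,\rho_h)\big),
\]
whose potential component equals $(u_\star-\Pi_h u_\star)+(c-c_h)$. The crucial observation — and the point where the ``constants play a separate role'' difficulty is resolved — is that the constant discrepancy drops out: for any constant $\kappa$ one has $a_{\mathrm{DG}}(\kappa,\punto)=0$ (constants have no gradient and no jumps) and $\smallfrac12\kappa-\mathcal K\kappa=\kappa$ (since $\mathcal K1\equiv-\smallfrac12$), so $B((\kappa,0),(w_h,\rho_h))=\kappa\int_\Gamma\rho_h=0$ precisely because $\rho_h\in\Lambda_h^{(0)}$. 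Hence the constant is annihilated, and using also the identity $u-\Pi_h u=u_\star-\Pi_h u_\star$, the diagonal term reduces to $B((u-\Pi_h u,\lambda-P_h\lambda),(w_h,\rho_h))$.

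Now Proposition~\ref{prop:4.1} applies directly, since $(u-\Pi_h u,\lambda-P_h\lambda)\in H^2(\mathcal T_h)\times L^2(\Gamma)$ and $(w_h,\rho_h)\in V_h^\star\times\Lambda_h$, giving
\[
c\,\triple{(w_h,\rho_h)}_h^2\le C_1\,\triple{(u-\Pi_h u,\lambda-P_h\lambda)}_{h,\mathrm{st}}\,\triple{(w_h,\rho_h)}_h,
\]
and cancelling one factor bounds $\triple{(w_h,\rho_h)}_h$ by the approximation error. I close with the triangle inequality: since $\triple{\punto}_h$ is insensitive to global constants in the potential, $\triple{(u_h-\Pi_h u,\lambda_h-P_h\lambda)}_h=\triple{(w_h,\rho_h)}_h$, so
\[
\triple{(u_h-u,\lambda_h-\lambda)}_h\le\triple{(w_h,\rho_h)}_h+\triple{(u-\Pi_h u,\lambda-P_h\lambda)}_h,
\]
and both terms are controlled by $\triple{(u-\Pi_h u,\lambda-P_h\lambda)}_{h,\mathrm{st}}$ (the second trivially, as $\triple{\punto}_h\le\triple{\punto}_{h,\mathrm{st}}$). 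I expect the only genuinely delicate point to be the bookkeeping with the constants $c$ and $c_h$: everything hinges on testing against $\rho_h\in\Lambda_h^{(0)}$ so that the constant mismatch is killed, which is exactly why the mean-zero structure recorded in \eqref{eq:4.6}, together with the decompositions \eqref{eq:2.7} and \eqref{eq:2.11}, is indispensable.
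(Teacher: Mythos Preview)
Your proof is correct and follows essentially the same route as the paper: reduce to the mean-zero discrete error pair $(w_h,\rho_h)=(u_{h,\star}-\Pi_h u_\star,\lambda_h-P_h\lambda)\in V_h^\star\times\Lambda_h^{(0)}$, apply the coercivity of Theorem~\ref{th:2.1}, use consistency~\eqref{eq:2.13} together with the observation that $B((c-c_h,0),(w_h,\rho_h))=0$ to rewrite the diagonal term as $B((u-\Pi_h u,\lambda-P_h\lambda),(w_h,\rho_h))$, and conclude via Proposition~\ref{prop:4.1} and the triangle inequality. The only cosmetic difference is that you spell out the triangle-inequality closing step and the identity $u-\Pi_h u=u_\star-\Pi_h u_\star$ explicitly, whereas the paper leaves these as ``direct consequence''.
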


\begin{proof}
Note that by \eqref{eq:4.6}, $\lambda_h-P_h \lambda \in \Lambda_h^{(0)}$. We can then apply Theorem \ref{th:2.1} and \eqref{eq:3.14} to bound
\begin{eqnarray}\nonumber
& & \hspace{-1cm}\triple{(u_h-\Pi_h u,\lambda_h-P_h\lambda)}_h^2 = \triple{(u_{h,\star}-\Pi_h u_\star,\lambda_h-P_h\lambda)}_h^2 \\
\nonumber
& & \le 4 \max\{1,C_\Gamma\} B((u_{h,\star}-\Pi_h u_\star,\lambda_h-P_h\lambda),(u_{h,\star}-\Pi_h u_\star,\lambda_h-P_h\lambda))\\
\label{eq:4.7}
& & =4 \max\{1,C_\Gamma\} B((u-\Pi_h u,\lambda_h-P_h\lambda),(u_{h,\star}-\Pi_h u_\star,\lambda_h-P_h\lambda)),
\end{eqnarray}
by the consistency of the method \eqref{eq:2.13}, the decomposition \eqref{eq:4.5} and the fact that
\[
B((c-c_h,0),(u_{h,\star}-\Pi_h u_\star,\lambda_h-P_h\lambda))=(c-c_h)\int_\Gamma (\lambda_h-P_h\lambda)=0
\]
(see \eqref{eq:4.6} and recall that $\mathcal K1\equiv -\frac12$). Since $u_{h,\star}-\Pi_h u_\star\in V_h^\star$, we can apply Proposition \ref{prop:4.1} and the result is a direct consequence of \eqref{eq:4.7}.
\end{proof}

\subsection{Estimate of the interior average}

Because we are dealing with the pure transmission problem (no boundary conditions), the discrete seminorm does not take into account the average of the component $u$ of the solution in $\Omega_-$. This is very clear from the coercivity estimate (Theorem \ref{th:2.1}), which is written in terms of that seminorm and, as such, is not able to estimate the error
\[
|c-c_h| =\frac1{|\Omega_-|}\left| \int_{\Omega_-} (u-u_h)\right|
\]
(see decompositions \eqref{eq:2.7} and \eqref{eq:2.11}). We start the analysis of this term with a lemma that combines some of the arguments of the previous sections.

\begin{lemma}\label{lemma:4.3} Let $\xi:=\frac12-\mathcal K^t 1$. Then there exists $C_3>0$ such that
\begin{equation}
|\langle \xi,v_h\rangle_\Gamma| \le C_3 \Big( \|\nabla_h v_h\|_{\Omega_-}^2+|v_h|_h^2\Big)^{1/2}\qquad \forall v_h \in V_h^\star.
\end{equation}
\end{lemma}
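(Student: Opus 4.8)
The key observation is that $\xi = \frac12 - \mathcal K^t 1$ is precisely the exterior normal derivative of a single layer potential with constant density. Indeed, recalling the identities used in the proof of Theorem \ref{th:2.1}, namely $\frac12\lambda - \mathcal K^t\lambda = -\partial_\nu^+(\mathcal S\lambda)$, we set $w^* := \mathcal S 1$ and obtain $\xi = -\partial_\nu^+ w^*$. Thus $\langle\xi, v_h\rangle_\Gamma = -\langle\partial_\nu^+ w^*, v_h\rangle_\Gamma$, and the whole analysis can be routed through the very same machinery that produced the coercivity estimate. The plan is therefore to mimic the final part of the proof of Theorem \ref{th:2.1}: I would introduce $\lambda_h^{(1)} := P_h 1 \in \Lambda_h$ (or directly exploit that the constant $1$ lives in $\Lambda_h$, so $w^* = \mathcal S 1$ has exactly the regularity the lemmas require), and then peel off the boundary pairing into an interior volume term plus the sum of interface pairings that Lemmas \ref{lemma:3.3} and \ref{lemma:3.4} are designed to control.

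Concretely, I would first write, using Green's formula element-by-element and the harmonicity $\Delta w^* = 0$ in each $K_e$ (and more generally in $\Omega_-$, since the constant density produces a potential harmonic off $\Gamma$),
\[
\langle \partial_\nu^+ w^*, v_h\rangle_\Gamma = (\nabla w^*, \nabla_h v_h)_{\Omega_-} - \sum_{e\in\mathcal E_h^\circ}\langle \partial_\nu w^*, \jump{v_h}\rangle_e,
\]
exactly paralleling the manipulation that led to \eqref{eq:3.23}. The first term on the right is bounded by $\|\nabla w^*\|_{\Omega_-}\,\|\nabla_h v_h\|_{\Omega_-}$ and $\|\nabla w^*\|_{\Omega_-}$ is a fixed constant (it equals $\langle \mathcal V 1, 1\rangle_\Gamma^{1/2}$ up to the free-space contribution, and in any case depends only on $\Gamma$, not on the mesh). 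The second term is handled by passing to reference configurations via the affine maps $F_e$ and invoking Lemmas \ref{lemma:3.3} and \ref{lemma:3.4} with $u^* = w^* = \mathcal S 1$: I would bound
\[
\sum_{e\in\mathcal E_h^\circ}\langle\partial_\nu w^*,\jump{v_h}\rangle_e \le \Big(\sum_e h_e^{d-2}\|\partial_\nu(w^*\circ F_e)\|_{-1/2+\varepsilon,F_e^{-1}(e)}^2\Big)^{1/2}\Big(\sum_e h_e^{d-2}\|\jump{v_h\circ F_e}\|_{1/2-\varepsilon,F_e^{-1}(e),\sim}^2\Big)^{1/2},
\]
which Lemma \ref{lemma:3.3} bounds by $C_\varepsilon^{\circled6}\|\nabla w^*\|_{\mathbb R^d}^2$ for the first factor and Lemma \ref{lemma:3.4} bounds by $C_\varepsilon^{\circled7}|v_h|_h^2$ for the second. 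Since $\|\nabla w^*\|_{\mathbb R^d}$ is again a fixed $\Gamma$-dependent constant, the entire right-hand side is controlled by $\big(\|\nabla_h v_h\|_{\Omega_-}^2 + |v_h|_h^2\big)^{1/2}$ with a constant $C_3$ absorbing $\|\nabla w^*\|_{\Omega_-}$, $\|\nabla w^*\|_{\mathbb R^d}$, $C_\varepsilon^{\circled6}$, and $C_\varepsilon^{\circled7}$.

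The one technical point that needs care (and which I expect to be the main obstacle, though a minor one) is the applicability of Lemma \ref{lemma:3.3}, which is stated for $u^* = \mathcal S\lambda_h$ with $\lambda_h\in\Lambda_h^{(0)}$. In the two-dimensional case, the constant density $1$ does \emph{not} belong to $H^{-1/2}_0(\Gamma)$, so $\mathcal S 1$ does not decay like $\mathcal O(1/r)$ and the global Dirichlet integral $\|\nabla(\mathcal S 1)\|_{\mathbb R^2}$ may be infinite. I would circumvent this by noting that Lemma \ref{lemma:3.3} is really a local statement: its proof only uses harmonicity of $u^*$ on the elements $K_e\subset\Omega_-$ together with the interior and near-boundary regularity estimates \eqref{eq:3.12}--\eqref{eq:3.13}, which hold for $w^*=\mathcal S 1$ regardless of the mean-zero constraint (the constraint was only invoked via \eqref{eq:3.14} to pass from $\|\nabla u^*\|_{\mathbb R^d}^2$ to $\|\lambda_h\|_{-1/2,\Gamma}^2$). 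Hence I would re-run the estimate of Lemma \ref{lemma:3.3} directly for $w^*$, replacing the final appeal to \eqref{eq:3.14} by the plain observation that $\|\nabla w^*\|_{\Omega_-}$ and $|\nabla w^*|_{\varepsilon,\Omega_-}$ are finite mesh-independent constants (the fixed density $1$ has a fixed $H^{-1/2+\varepsilon}(\Gamma)$ norm), so no inverse inequality and no mean-zero hypothesis are needed. This yields the desired bound and completes the proof.
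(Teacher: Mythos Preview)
Your plan contains a genuine gap at the very first step. You correctly identify $\xi=-\partial_\nu^+ w^*$ with $w^*=\mathcal S 1$, but the element-by-element Green identity in $\Omega_-$ produces the \emph{interior} normal derivative $\partial_\nu^- w^*$ on $\Gamma$, not $\partial_\nu^+ w^*$. The two differ by the jump of the single layer potential, $\partial_\nu^- w^*-\partial_\nu^+ w^*=1$, so the decomposition that integration by parts actually delivers is
\[
\langle\xi,v_h\rangle_\Gamma=\langle 1,v_h\rangle_\Gamma-\langle\partial_\nu^- w^*,v_h\rangle_\Gamma,
\]
which matches the paper's starting point $\xi=1-\partial_\nu^-\mathcal S1$. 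Your argument handles the second term essentially as the paper does (and your remark on relaxing the mean-zero hypothesis in Lemma~\ref{lemma:3.3} is exactly right), but it leaves the first term $\int_\Gamma v_h$ completely unaccounted for.

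That this term is not a mere technicality is signalled by the fact that your argument never uses the assumption $v_h\in V_h^\star$: for $v_h\equiv 1$ the right-hand side of the claimed bound vanishes while $\int_\Gamma v_h=|\Gamma|\neq0$, so without the zero-mean hypothesis the estimate is false. The paper controls $\langle 1,v_h\rangle_\Gamma$ by introducing the solution $\widetilde u$ of the interior Neumann problem $-\Delta\widetilde u=|\Gamma|/|\Omega_-|$, $\partial_\nu\widetilde u=1$, integrating by parts element-wise, and then using $v_h\in L^2_\star(\Omega_-)$ to kill the volume term $(c,v_h)_{\Omega_-}$. You need some device of this kind to close the argument.
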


\begin{proof}
Note that $\xi=1-\partial_\nu^- \mathcal S 1$. We can then bound
\begin{equation}\label{eq:4.9}
|\langle \partial_\nu^- \mathcal S 1,v_h\rangle_\Gamma|\le C_4 \Big( \|\nabla_h v_h\|_{\Omega_-}^2+|v_h|_h^2\Big)^{1/2}
\qquad \forall v_h \in V_h,
\end{equation}
by proceeding as in the proof of Theorem \ref{th:2.1} (see Section \ref{sec:3.4}). Note that when we aim to apply Lemma \ref{lemma:3.3}, in the two-dimensional case $\lambda_h \equiv 1 \not\in \Lambda_h^{(0)}$. However, we can still bound as in \eqref{eq:3.17}
\[
\sum_{e\in \mathcal E_h^\circ} h_e^{d-2}\|\nabla ((\mathcal S 1)\circ F_e)\|_{\varepsilon,\widehat K}^2 \le C_\circ\Big( \|\nabla (\mathcal S 1)\|_{\Omega_-}^2 + h^{2\varepsilon} |\nabla (\mathcal S 1)|_{\varepsilon,\Omega_-}^2\Big),
\]
which is enough for our current needs. In order to estimate the remaining term, we consider the solution of
the interior Neumann problem
\[
-\Delta \widetilde u \equiv c \quad \mbox{in $\Omega^-$},\qquad \partial_\nu \widetilde u \equiv 1, \qquad \mbox{with } c:=\frac{|\Gamma|}{|\Omega_-|},
\]
and note that classical Sobolev regularity results can be invoked to show that $\widetilde u\in H^{3/2+\varepsilon}(\Omega_-)$ for some $\varepsilon>0$. We then apply integration by parts element-by-element to obtain
\begin{equation}\label{eq:4.10}
\langle 1,v_h\rangle_\Gamma = -\sum_{e\in \mathcal E_h^\circ} \langle\partial_\nu \widetilde u,\jump{v_h}\rangle_e + (\nabla \widetilde u,\nabla_h v_h)_{\Omega_-}- c (1,v_h)_{\Omega_-}.
\end{equation}
The first term is bounded using
\begin{eqnarray}\nonumber
\Big| -\sum_{e\in \mathcal E_h^\circ} \langle\partial_\nu \widetilde u,\jump{v_h}\rangle_e\Big| & \le &\Big( \sum_{e\in \mathcal E_h^\circ} h_e \| \partial_\nu \widetilde u\|_e^2 \Big)^{1/2}|v_h|_h\le 
\Big(\sum_{K\in \mathcal T_h} h_K \| \nabla \widetilde u\|_{\partial K}^2 \Big)^{1/2}|v_h|_h\\
&\le & C_\varepsilon^{\circled8}\Big( \sum_{K\in \mathcal T_h}  \| \nabla \widetilde u\|_{K}^2 + h_K^{2\varepsilon} | \nabla \widetilde u|_{\varepsilon,K}^2\Big)^{1/2}|v_h|_h,
\label{eq:4.11}
\end{eqnarray}
where the last inequality follows from a scaling argument and the trace inequality in a fixed reference element.
Since $v_h \in L^2_\star(\Omega_-)$, the result follows from the inequalities \eqref{eq:4.9}, \eqref{eq:4.10} and \eqref{eq:4.11}.
\end{proof}

\begin{proposition}\label{prop:4.4} There exists $C_5>0$ such that
\[
|c-c_h| \le C  \triple{(u-\Pi_h u,\lambda-P_h\lambda)}_{h,\mathrm{st}}
\]
\end{proposition}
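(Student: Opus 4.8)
The plan is to start from the explicit formulas \eqref{eq:2.7b} and \eqref{eq:2.12} for $c$ and $c_h$. Subtracting them, the two occurrences of $\beta_0$ cancel and we are left with
\[
-|\Gamma|(c-c_h)=\int_\Gamma \mathcal V(\lambda-\lambda_h)+\int_\Gamma \big(\smallfrac12\mathcal I-\mathcal K\big)(\gamma u_\star-u_{h,\star}).
\]
I would then move the integral operators onto the constant function $1$: using that $\mathcal V$ is self-adjoint and that $\mathcal K$ has transpose $\mathcal K^t$ with respect to the $L^2(\Gamma)$ pairing, this becomes
\[
-|\Gamma|(c-c_h)=\langle \mathcal V 1,\lambda-\lambda_h\rangle_\Gamma+\langle \xi,\gamma u_\star-u_{h,\star}\rangle_\Gamma,
\]
with $\xi=\smallfrac12-\mathcal K^t 1$ exactly the quantity of Lemma \ref{lemma:4.3}. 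This identity is the crux: it reduces the estimate to two pairings that we already know how to control.

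For the first pairing, since $\mathcal V:H^{-1/2}(\Gamma)\to H^{1/2}(\Gamma)$ is bounded, $\mathcal V 1\in H^{1/2}(\Gamma)$ and hence $|\langle \mathcal V 1,\lambda-\lambda_h\rangle_\Gamma|\le \|\mathcal V 1\|_{1/2,\Gamma}\,\|\lambda-\lambda_h\|_{-1/2,\Gamma}$. The factor $\|\lambda-\lambda_h\|_{-1/2,\Gamma}$ is part of $\triple{(u-u_h,\lambda-\lambda_h)}_h$, so the quasi-optimality bound of Proposition \ref{prop:4.2} closes this term.

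The main obstacle is the second pairing, because $\gamma u_\star-u_{h,\star}$ is not a discrete function while Lemma \ref{lemma:4.3} applies only on $V_h^\star$. I would therefore split $\gamma u_\star-u_{h,\star}=(u-\Pi_h u)|_\Gamma+(\Pi_h u_\star-u_{h,\star})|_\Gamma$, using that $u_\star-\Pi_h u_\star=u-\Pi_h u$ (because $\Pi_h$ fixes constants) and that $\Pi_h u_\star-u_{h,\star}\in V_h^\star$. To the discrete summand I apply Lemma \ref{lemma:4.3}, bounding $|\langle \xi,(\Pi_h u_\star-u_{h,\star})|_\Gamma\rangle_\Gamma|\le C_3\,\triple{(\Pi_h u_\star-u_{h,\star},0)}_h$; a triangle inequality, the observation that $u_\star-u_{h,\star}$ differs from $u-u_h$ only by a constant (which changes neither $\nabla_h$ nor the jumps), and a further appeal to Proposition \ref{prop:4.2} control this by $\triple{(u-\Pi_h u,\lambda-P_h\lambda)}_{h,\mathrm{st}}$. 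For the non-discrete summand I would use only that $\xi\in L^2(\Gamma)$ (since $\mathcal K^t:L^2(\Gamma)\to L^2(\Gamma)$ is bounded), so that $|\langle \xi,(u-\Pi_h u)|_\Gamma\rangle_\Gamma|\le \|\xi\|_{\Gamma}\,\|(u-\Pi_h u)|_\Gamma\|_{\Gamma}$, and then absorb the $L^2(\Gamma)$ norm of the trace into the term $\sum_{e\in\Gamma_h}h_e^{-1}\|u-\Pi_h u\|_e^2$ of the strong norm at the cost of a harmless power of $h_{\mathrm{strip}}$ (recall \eqref{eq:3.10b}). Collecting the three contributions yields the claimed bound.
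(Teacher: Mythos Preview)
Your proof is correct and follows essentially the same route as the paper: subtract \eqref{eq:2.7b} from \eqref{eq:2.12}, handle the $\mathcal V$-term by continuity $H^{-1/2}(\Gamma)\to H^{1/2}(\Gamma)$, split the $\xi$-term into the projection error $(u-\Pi_h u)|_\Gamma$ (bounded using $\xi\in L^2(\Gamma)$ and the boundary part of the strong norm) and the discrete piece $\Pi_h u_\star-u_{h,\star}\in V_h^\star$ (bounded via Lemma~\ref{lemma:4.3}), and close with Proposition~\ref{prop:4.2}. The only cosmetic difference is that you pass the operators onto the constant $1$ explicitly via self-adjointness of $\mathcal V$ and the transpose of $\mathcal K$, whereas the paper leaves them on $\lambda-\lambda_h$ and $\gamma u_\star-u_{h,\star}$; the estimates are identical.
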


\begin{proof}
Using the integral formulas for $c$ and $c_h$ (that is, \eqref{eq:2.7b} and \eqref{eq:2.12}), we can easily bound
\begin{eqnarray*}
|\Gamma| \, |c-c_h| & \le & \| \mathcal V (\lambda-\lambda_h)\|_{1/2,\Gamma} + \| \xi\|_\Gamma \| \gamma u_\star-\Pi_h u_\star\|_\Gamma + |\langle \xi,u_{\star,h}-\Pi_h u_\star\rangle_\Gamma|\\
&\le & \|\mathcal V\|_{H^{-1/2}(\Gamma) \to H^{1/2}(\Gamma)} \|\lambda-\lambda_h\|_{-1/2,\Gamma} + 
 \| \xi\|_\Gamma \| \gamma u-\Pi_h u\|_\Gamma + |\langle \xi,u_{\star,h}-\Pi_h u_\star\rangle_\Gamma|
\end{eqnarray*}
where $\xi:=\frac12-\mathcal K^t 1$. Note that $u_{h,\star}-\Pi_h u_\star\in V_h^\star$ and we can thus apply Lemma \ref{lemma:4.3} to bound
\[
 |\langle \xi,u_{\star,h}-\Pi_h u_\star\rangle_\Gamma|\le C_\xi \triple{(u_{\star,h}-\Pi_h u_\star,\lambda_h-P_h\lambda)}_h= \triple{(u_{h}-\Pi_h u,\lambda_h-P_h\lambda)}_h.
\]
The result is now a consequence of the quasi-optimality bound (Proposition \ref{prop:4.2}) and the definition of the discrete norms.
\end{proof}

\subsection{Proof of Theorem \ref{th:2.3}}\label{sec:4.3}

By simple approximation properties (using scaling arguments and the Bramble-Hilbert lemma), we can bound
\begin{equation}\label{eq:4.13}
\triple{(u-\Pi_h u,\lambda-P_h \lambda)}_{h,\mathrm{st}} \le C_6 \Big( \sum_{K\in \mathcal T_h} h_K^{2k} | u|_{k+1,K}^2 + \sum_{e\in \Gamma_h} h_e^{2k+1} |\lambda|_{k+1,e}^2\Big)^{1/2}
\end{equation}
This inequality and Proposition \ref{prop:4.2} provide the bound
\[
\| \nabla u-\nabla_h u_h\|_{\Omega_-} +|u_h|_h +\|\lambda-\lambda_h\|_{-1/2,\Gamma} \le  C_7 \Big( \sum_{K\in \mathcal T_h} h_K^{2k} | u|_{k+1,K}^2 + \sum_{e\in \Gamma_h} h_e^{2k+1} |\lambda|_{k+1,e}^2\Big)^{1/2}.
\]
For the bound of the $L^2(\Omega_-)$ error we use the decompositions \eqref{eq:2.7} and \eqref{eq:2.11} and the discrete Poincar\'e-Friedrichs inequality \eqref{eq:4.2} to prove that
\begin{eqnarray*}
\| \Pi_h u-u_h\|_{\Omega_-} & \le & |c-c_h|\,|\Omega_-|^{1/2}+ \| \Pi_h u_\star-u_{h,\star}\|_{\Omega_-}\\
&\le &  |c-c_h|\,|\Omega_-|^{1/2} + C_{\mathrm{PF}} \triple{(\Pi_h u_\star-u_{h,\star},0)}_h
\end{eqnarray*}
Propositions \ref{prop:4.2} and \ref{prop:4.4}, and the approximation estimate \eqref{eq:4.13} then provide the $L^2(\Omega_-)$-estimate. Note that this estimate is suboptimal (some of the IP methods included in this analysis have no superconvergence properties), but that it is needed as a complement of previously given error bounds, since  $\triple{\punto}_h$ is only a seminorm.

Subtracting the representation formula \eqref{eq:2.4} from the discrete approximation for the exterior solution \eqref{eq:2.10}, and using the definition of the layer potentials given in Section \ref{sec:2.1}, we can bound
\begin{eqnarray}\nonumber
| u_{+,h}(\mathbf x)-u_+(\mathbf x)| & \le & \| \Phi(\mathbf x,\punto)\|_{1/2,\Gamma} \|\lambda-\lambda_h\|_{-1/2,\Gamma} + \| \Phi(\mathbf x,\punto)\|_\Gamma \| \gamma u-\Pi_h u\|_\Gamma\\
& & +|\langle \partial_\nu \Phi(\mathbf x,\punto),\Pi_h u-u_h\rangle_\Gamma|.\label{eq:4.14}
\end{eqnarray}
For fixed $\mathbf x \in \Omega_+$, $\Phi(\mathbf x,\punto)\in \mathcal C^\infty (\overline\Omega_-)$ is harmonic. Using element-by-element integration by parts and a scaling argument \eqref{eq:4.0}, we can write
\begin{eqnarray*}
|\langle \partial_\nu \Phi(\mathbf x,\punto),v_h\rangle_\Gamma|&=&\Big|-\sum_{e\in \mathcal E_h^\circ} \langle \partial_\nu \Phi(\mathbf x,\punto),\jump{v_h}\rangle_e+(\nabla \Phi(\mathbf x,\punto),\nabla_h v_h)_{\Omega_-}\Big|\\
&\le & \Big( \sum_{K\in\mathcal T_h} h_K \|\nabla \Phi(\mathbf x,\punto)\|_{\partial K}^2\Big)^{1/2} | v_h|_h+\|\nabla \Phi(\mathbf x,\punto)\|_{\Omega_-}\| \nabla_h v_h\|_{\Omega_-}\\
&\le & \max\{C_{\star\star}^{1/2},\,h^2 C_{\star\star}^{1/2}+1\} \|\Phi(\mathbf x,\punto)\|_{2,\Omega_-}\triple{(v_h,0)}_h\qquad \forall v_h \in V_h.
\end{eqnarray*}
The bound \eqref{eq:2.16} is then a consequence of this latter inequality applied to $v_h=\Pi_h u-u_h$, \eqref{eq:4.14}
and the above estimates for the error.

\bibliographystyle{abbrv}
\bibliography{RefsBEMFEM}

\end{document}